\documentclass{article}
\usepackage[%
]{ems-journal}

\numberwithin{equation}{section}
\usepackage{graphicx} 
\graphicspath{{./Images/}}
\usepackage{sectsty}
\usepackage{amsmath}
\usepackage[most]{tcolorbox}
\usepackage{bbm}
\usepackage{bigints}
\usepackage{tikz}
\usetikzlibrary{calc}
\usepackage{lipsum}
\usepackage{subcaption}
\usepackage{pythonhighlight}
\usepackage{setspace,mathrsfs,amsthm,mathtools,cite,tabularx,fontenc}
\usepackage{appendix}
\usepackage{enumitem}
\newtcolorbox{mybox}[1]{%
    tikznode boxed title,
    enhanced,
    arc=0mm,
    interior style={white},
    attach boxed title to top left= {yshift=-\tcboxedtitleheight/2-0.05cm, xshift=0.7cm},
    fonttitle=\small\bfseries,
    colbacktitle=white,coltitle=black,
    boxed title style={size=small,colframe=white,boxrule=0pt},
    title={#1}}

\makeatletter
\newtheorem*{rep@theorem}{\rep@title}
\newcommand{\newreptheorem}[2]{%
	\newenvironment{rep#1}[1]{%
		\def\rep@title{#2 \ref{##1}}%
		\begin{rep@theorem}}%
		{\end{rep@theorem}}}
\makeatother

\newtheorem{theorem}{Theorem}[subsection]
\newreptheorem{theorem}{Theorem}
\newtheorem{lemma}{Lemma}[subsection]

\newtheorem{proposition}[lemma]{Proposition}
\theoremstyle{definition}
\newtheorem{remark}{Remark}[subsection]
\newtheorem{definition}{Definition}[subsection]

\usepackage[nointegrals]{wasysym}
\usepackage[misc]{ifsym}
\usepackage[dvipsnames]{xcolor}
\definecolor{ao}{rgb}{0.0, 0.5, 0.0}
\definecolor{lasallegreen}{rgb}{0.03, 0.47, 0.19}
\usepackage{hyperref}

\newcommand{\AM}[1]{\textcolor{darkgreen}{#1}} 
\definecolor{darkgreen}{rgb}{0,0.7,0}

\let\oldnorm\norm
\def\norm{\@ifstar{\oldnorm}{\oldnorm*}}

\newcommand{\pa} {\partial}

\newcommand{\De} {\Delta}

\newcommand{\Om} {\Omega}

\newcommand{\si} {\sigma}

\newcommand{\Gr} {\nabla}

\newcommand{\sub}{\subset}

\newcommand{\dx}{{\,\textrm d}x}
\newcommand{\dy}{{\,\textrm d}y}

\newcommand{\dsi}{{\,\textrm d}\si}

\newcommand{\F}{{\mathcal F}}

\newcommand{\R}{{\mathbbm R}}
\newcommand{\N}{{\mathbb N}}

\def\dx{{\,\textrm d}x}

\def\F{{\mathcal F}}

\begin{document}

\title{A new convergence proof for an iterative scheme for elliptic Cauchy problems based on alternating orthogonal projections}
\titlemark{Iterative scheme for elliptic Cauchy problems}


%

\emsauthor{1}{
	\givenname{Abhijit}
	\surname{Guchhait}
	\mrid{}
	\zblid{}
	\orcid{}}{A.~Guchhait}
\emsauthor{2}{
	\givenname{Adrian}
	\surname{Muntean}
	\mrid{}
	\zblid{}
	\orcid{}}{A.~Muntean}
\emsauthor{3}{
	\givenname{Martin}
	\surname{Lind}
	\mrid{}
	\zblid{}
	\orcid{}}{M.~Lind}

\Emsaffil{1}{
	\department{Department of Mathematics and Computer Science}
	\organisation{Karlstad University}
	\rorid{}
	\address{}
	\zip{}
	\city{}
	\country{Sweden}
	\affemail{abhijit.guchhait@kau.se}}
\Emsaffil{2}{
	\department{1}{Department of Mathematics and Computer Science}
	\organisation{1}{Karlstad University}
	\rorid{1}{}
	\address{1}{}
	\zip{1}{}
	\city{1}{}
	\country{1}{Sweden}
	\affemail{adrian.muntean@kau.se}}
\Emsaffil{3}{
	\department{Department of Mathematics and Computer Science}
	\organisation{Karlstad University}
	\rorid{}
	\address{}
	\zip{}
	\city{}
	\country{Sweden}
	\affemail{martin.lind@kau.se}}

\classification[35J25, 47A50]{35R25, 73C35}

\keywords{Elliptic Cauchy problems, mixed boundary value problems, iteration scheme, alternating orthogonal projections}

\begin{abstract} 
We consider an elliptic Cauchy problem for the Poisson equation in which both Dirichlet and Neumann boundary data are prescribed only on a portion of the boundary, while no boundary conditions are available on the remaining part of the boundary. Such problems are severely ill-posed. 
In this work, we study the convergence of an iterative method based on the alternating imposition of Dirichlet and Neumann boundary conditions.
We formulate the scheme within a Hilbert space framework, where a yet another alternation becomes apparent: each iteration can now be interpreted as an orthogonal projection onto an appropriate closed subspace. This perspective provides a transparent geometric interpretation of the algorithm and yields a concise convergence proof based on von Neumann's classical result on alternating (orthogonal) projections. In addition to elucidating the convergence of the method, this alternating projections perspective also offers a general rate of convergence in terms of the Friedrichs angle (between two closed subspaces). We point out by means of an example that such convergence rate estimate degenerates for the elliptic Cauchy problem under consideration. 

\end{abstract}

\maketitle


\section{Introduction} 

We focus our attention on the following Poisson-type problem:
\begin{equation}
    \begin{aligned}\label{mainprob}
    -\De u&=f\hspace{6mm}\text{ in }\Om,\\
    u&=u_0 \hspace{6mm}\text{on }S,\\
    \frac{\pa u}{\pa n}&=q_0\hspace{6.6mm}\text{on }S,
\end{aligned}
\end{equation} 
where $\Om\sub\R^{d}$ ($d\in \N$) is a bounded domain  with a piecewise  $C^1$  boundary $\pa \Om$. Here  $\pa \Om$ is decomposed into two mutually disjoint  (non-empty) parts $S$ and $L$. The part $S$ is referred to as being "accessible", while the part $L$ is called "inaccessible".  The special feature of problem \eqref{mainprob} is that on $S$ the boundary information is over-determined (in the sense that both Dirichlet and Neumann data are prescribed), while on $L$ the boundary data is missing (or unavailable due to an incomplete modeling, or simply inaccessible\footnote{Think, for instance,  at applications to materials science where measuring devices are unable to unveil the wanted information deep down inside the target material.}). Problems of this type are commonly referred to as elliptic Cauchy problems. 
Such scenarios occur in a variety of real-world applications, including  inverse heat conduction \cite{Marin}, inverse electrocardiography \cite{Eduardo}, electromagnetic problems governed by Maxwell's equations  \cite{Darbas}, linear elasticity \cite{Bastien},  and game theory \cite{Habbal}. Our own motivation stems from handling homogenization questions posed for perforated periodic media with defects; see e.g. \cite{Lions}. It is well-known that around the location of the defect it is not clear how to formulate boundary conditions for the corresponding cell problem, leaving open the possibility of treating the (defect)  situation as an elliptic Cauchy problem, with the "accessible" part $S$ being endowed with periodic boundary conditions, and the remaining part $L$ staying "inaccessible".  

  The problem \eqref{mainprob} is classically ill-posed in the sense of Hadamard. In fact, both the existence of a solution and its continuous dependence on the Cauchy data may fail. Nevertheless, whenever a solution exists, it is unique; see, e.g., \cite{leitao}. Regarding existence, even when $f=0$ and the Cauchy data are analytic functions, the Cauchy-Kovalevskaya theorem provides only local existence in a neighborhood of $S$ and it is general impossible to extend to the whole of $\Omega$. This inherent ill-posedness of the problem makes it necessary for us to assume from the outset that \eqref{mainprob} has a solution. Once this assumption is made, one of the main questions is:
  
  {\em Is it possible to construct stable and computable approximations to the solution of problem \eqref{mainprob}? }

There are many ways to address this question. What concerns our own investigations, we have chosen as starting point for the discussion the iterative scheme of Kozlov, Maz'ya, and Fomin cf. \cite{mazya_kozlov,mazya}.  This transforms the ill-posed Cauchy problem \eqref{mainprob} into a sequence of well-posed boundary-value problems. At each step, the unknown information on the boundary part $L$ is approximated using suitable information  from the previous iteration. This iterative perspective is closely related in spirit to the  Dirichlet--Neumann alternating method, which is widely used in domain decomposition exercises for the iterative exchange of complementary boundary information between the subdomains (see \cite{Emil} and references therein for more details). The approach in the spirit of \cite{mazya} is attractive since it improves numerical stability while allowing the unknown boundary data to be progressively updated (see e.g. \cite{leitao}, \cite{Engl}, \cite{Jourhmane}). Thus, each step is well-posed and computationally tractable, while the resulting sequence collects approximations of the solution to the original Cauchy problem.

The convergence of this iterative scheme to the solution of \eqref{mainprob} has been proven earlier (cf. e.g. \cite{mazya} and \cite{leitao}).
Our main contribution in this work is a novel convergence proof using the framework of alternating orthogonal projections, in particular von Neumann's alternating projection theorem (see \cite{vNeu1950} and \cite{badea}). More precisely, we construct a suitable Hilbert space $\mathcal{H}$ and identify two fixed closed subspaces of $\mathcal{H}$, independent of the iteration index, such that each step of the iterative scheme is represented precisely by successive orthogonal projections onto these two subspaces (see Theorem \ref{Th 3.2.1} -- our main result reported here). Consequently, the (strong) convergence of the iterative scheme can now be understood in terms of convergence of repeated orthogonal projections, exactly as in von Neumann's theorem. This is providing a unified functional-analytic interpretation of the proposed iterative scheme.

Once convergence is established, investigating the convergence rate is a natural next step. The theory surrounding alternating projections provides a general convergence rate for the iteration error. Without going into detail, denote by $e^{(n)}$ the error after $n$ iterations (or projections), then
\begin{equation}
    \label{intRate}
    \|e^{(n)}\|_\mathcal{H}\le\rho^n\|e^{(0)}\|_\mathcal{H}.
\end{equation}
The quantity $\rho\in[0,1]$ is expressed in terms of a geometrical quantity called the Friedrichs angle which depends on the aforementioned closed subspaces of $\mathcal{H}$ (and in particular, on $\Omega$). 
Unfortunately, in our case, this approach does not in general provide any rate of convergence. Indeed, we provide an explicit example of a domain $\Omega$ for which $\rho=1$ in (\ref{intRate}). We believe that this is due to the actual structure of problem \eqref{mainprob}, but the clarification of this matter requires further attention and goes beyond the purpose of this work.

On the other hand, we think it would be very interesting (and of clear practical relevance) to relate an eventual convergence rate of the iteration scheme with the measure $|L|$ of the unknown portion of the boundary. Intuitively, we expect that the larger the inaccessible part of the boundary is, the slower the iterations will converge. With these ideas in mind, we plan to revisit the convergence proof of the iterative scheme form a different point of view, hoping to be able to get some insight in the structure of the convergence rate. At present, we restrict ourselves by including a formal calculation suggesting that, under additional assumptions, the quantity $|L|$ may provide an upper bound for $\rho$ (see Remark \ref{formalRemark}).
 
The paper is organized as follows. After a brief introduction, we make our assumptions about the data, and  we introduce the functional setting. In Section 2, we discuss assumptions and notations. In Section 3, we describe the iterative scheme for which each step is well-posed. In Section 4, we analyze the convergence of this iterative scheme by means of the method of alternating orthogonal projections. In Section 5, we discuss the quantitative convergence rate of the iterative scheme via the Friedrichs angle between the two subspaces of $\mathcal{H}$. Section 6 contains our conclusions from this work. Finally, several auxiliary results and technical details used in the mathematical analysis of our approach are collected in the Appendix.

\section{Assumptions on data. Basic notation}
In this section, we collect the restrictions on the data and parameters that are required for our approach to work. We denote by  \ref{A_1}--\ref{A_3} the assumptions on the data and parameters of the original problem, and by \ref{B}  the additional assumptions required by the iterative scheme introduced in Section \ref{se 2.1}. 

Our starting point is precisely as in \cite[section~1.2]{leitao}: We consider the exact solution $u$ to our problem \eqref{mainprob} to be available and lying in $u\in H^1(\Om)$, i.e. $u$ solves the weak formulation of the Poisson equation posed in $\Om$ and also satisfies the Cauchy data.
To specify the functional setting we have in mind, we list now a the assumptions we need on data and parameters, viz.
\begin{enumerate}[label=$(A_{\arabic*})$]
\item\label{A_1} $S\neq\emptyset \text{ and } L\neq \emptyset$;
\item\label{A_2} $u_0\in H^\frac{1}{2}(S)$ and $q_0\in H^{-\frac{1}{2}}(S)$;
\item \label{A_3} $f\in L^2(\Om)$;
\end{enumerate}
\begin{enumerate}[label=$(B_{})$]
\item\label{B} $p^{(0)}\in H^{-\frac{1}{2}}(L)$.
\end{enumerate}


Assumptions \ref{A_1}--\ref{A_3} and \ref{B} are mild, fitting for many possible physical interpretations of \eqref{mainprob}. They correspond to the choice of the assumed regularity of the  exact solution.
\\
It is useful to define the subspace of $H^1(\Om)$ consisting of (Sobolev) functions that vanish on $S$, that is
$$H_{DS}^1(\Om):=\{\phi\in H^1(\Om)\hspace{2mm}\text{such that}\hspace{2mm} \phi|_{S}=0\}.$$
The norm on this space is the same as the standard $H^1$-norm
$$\|\phi\|_{H_{DS}^1(\Om)}:=\|\phi\|_{H^1(\Om)}.$$
Analogously, we define
$$
H_{DL}^1(\Om):=\{\phi\in H^1(\Om)\quad\text{such that}\quad\phi|_{L}=0\}.
$$
We refer the reader to \cite{Brezis} and \cite{Evans} for the relevant background on Sobolev spaces.

\section{Description of the iterative scheme and well-posedness}
In this section, we construct an iterative scheme based on the approach proposed in \cite{mazya}. We first introduce the iterative scheme and then establish its well-posedness.
\subsection{Iterative scheme}\label{se 2.1}
We now present the iterative scheme step by step. Essentially, the scheme consists of three stages: \texttt{Initial step},\texttt{ Odd steps}, and \texttt{Even steps}. Each step is designed to ensure that the overall scheme is well-posed.\\
The iterative scheme described in \eqref{IS}--\eqref{ES}, viz.
\begin{mybox}{\texttt{Initial step}}
\begin{equation}
    \begin{aligned}\label{IS}
    -\De u^{(0)}&=f \hspace{7.6mm}\text{in }\Om,\\
    u^{(0)}&=u_0\hspace{6mm}\text{on } S,\\
    \frac{\pa u^{(0)}}{\pa n}&=p^{(0)}\hspace{4.5mm}\text{on } L,
\end{aligned}
\end{equation}
where $p^{(0)}$ is our initial guess.
\end{mybox}
\begin{mybox}{\texttt{Odd steps}}
\begin{equation}
    \begin{aligned}\label{OS}
    -\De u^{(2k+1)}&=f \hspace{10mm}\text{in }\Om,\\
    u^{(2k+1)}&=u^{(2k)}\hspace{5mm}\text{on }L,\\
    \frac{\pa u^{(2k+1)}}{\pa n}&=q_0\hspace{9.5mm}\text{on } S.
\end{aligned}
\end{equation}
\end{mybox}
\begin{mybox}{\texttt{Even steps}}
\begin{equation}
    \begin{aligned}\label{ES}
    -\De u^{(2k+2)}&=f \hspace{18.7mm}\text{in}\hspace{3mm}\Om,\\
    u^{(2k+2)}&=u_0\hspace{17mm}\text{on}\hspace{3mm} S,\\
    \frac{\pa u^{(2k+2)}}{\pa n}&=\frac{\pa u^{(2k+1)}}{\pa n}\hspace{7mm}\text{on}\hspace{3mm} L.
\end{aligned}
\end{equation}
\end{mybox}
\subsection{Well-posedness}
Now, we establish the well-posedness of the iterative scheme \eqref{IS}--\eqref{ES}.
\begin{definition}\label{2.2.1}
    We say $u^{(0)}\in H^1(\Om)$ is a weak solution of problem \eqref{IS} if and only if the following identity holds:
    \begin{align*}
        \underset{\Om}{\int}\Gr u^{(0)}\Gr \phi\dx&=\underset{\Om}{\int} f\phi\dx +\underset{L}{\int}p^{(0)}\phi 
          \dsi,
    \end{align*}
    $\text{for all } \phi\in H_{DS}^1(\Om)$ with $u^{(0)}=u_0$ on $S$.
\end{definition}
\begin{definition}\label{2.2.2}
   For each $k\in\N\cup\{0\}$, we say $u^{(2k+1)}\in H^1(\Om)$ is a weak solution of problem \eqref{OS} if and only if the following identity holds:
    \begin{align*}
        \underset{\Om}{\int}\Gr u^{(2k+1)}\Gr \psi\dx&=\underset{\Om}{\int} f\psi\dx+\underset{S}{\int}q_0\psi\dsi ,
    \end{align*}
    $\text{for all } \psi\in H_{DL}^1(\Om)$ with $u^{(2k+1)}|_L=u^{(2k)}|_L$.
\end{definition}
\begin{definition}\label{2.2.3}
   For each $k\in\N\cup\{0\}$, we say $u^{(2k+2)}\in H^1(\Om)$ is a weak solution of problem \eqref{ES} if and only if the following identity holds:
    \begin{align*}
        \underset{\Om}{\int}\Gr u^{(2k+2)}\Gr \phi\dx&=\underset{\Om}{\int} f\phi\dx +\underset{L}{\int}\frac{\pa u^{(2k+1)}}{\pa n}\phi\dsi,
    \end{align*}
    $\text{for all }\phi\in H_{DS}^1(\Om)$ with $u^{(2k+2)}=u_0$ on $S$.
\end{definition}
\begin{theorem}
     Let Assumptions \ref{A_1}--\ref{A_3} and \ref{B} hold. For any $k\in\N\cup\{0\}$ arbitrarily fixed, Problem \eqref{IS}, Problem \eqref{OS}, and Problem \eqref{ES} are well-posed in the sense of Definition \ref{2.2.1}, Definition \ref{2.2.2}, and  Definition \ref{2.2.3}, respectively.
\end{theorem}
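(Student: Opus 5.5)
The plan is to reduce each of the three problems to a homogeneous mixed problem and apply the Lax--Milgram lemma on the closed subspaces $H_{DS}^1(\Om)$ and $H_{DL}^1(\Om)$. We then proceed inductively in $k$, so that the data used at a given step (a Neumann trace in $H^{-\frac12}(L)$ or a Dirichlet trace in $H^{\frac12}(L)$) is supplied by the previous step.

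\textbf{Step 1 (Initial step \eqref{IS}).} By \ref{A_2} and the surjectivity of the trace operator on $\pa\Om$, we extend $u_0$ to $H^{\frac12}(\pa\Om)$ and choose a lifting $w_0\in H^1(\Om)$ with $w_0|_S=u_0$ and $\|w_0\|_{H^1(\Om)}\le C\|u_0\|_{H^{\frac12}(S)}$. We look for $u^{(0)}=w_0+v$ with $v\in H_{DS}^1(\Om)$ solving
$$a(v,\phi):=\int_\Om \Gr v\,\Gr\phi\dx=\int_\Om f\phi\dx+\int_L p^{(0)}\phi\dsi-\int_\Om\Gr w_0\,\Gr\phi\dx\quad\text{for all }\phi\in H_{DS}^1(\Om).$$
The space $H_{DS}^1(\Om)$ is closed in $H^1(\Om)$ by continuity of the trace. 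The right-hand side is a bounded functional on it by \ref{A_3}, \ref{B} and the trace theorem; the boundary term is interpreted as the $H^{-\frac12}(L)$--$H^{\frac12}_{00}(L)$ duality, since traces of $H_{DS}^1$ functions restricted to $L$ lie in that space. Coercivity of $a$ on $H_{DS}^1(\Om)$ follows from a Poincar\'e--Friedrichs inequality $\|\phi\|_{L^2(\Om)}\le C\|\Gr\phi\|_{L^2(\Om)}$ for functions vanishing on a boundary portion of positive measure. Here \ref{A_1} is used, together with the implicit requirement that $S$ has positive surface measure. Lax--Milgram then gives existence and uniqueness of $v$, and a stability estimate for $u^{(0)}$ in terms of $\|f\|,\|u_0\|,\|p^{(0)}\|$. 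Uniqueness of $u^{(0)}$ is independent of the chosen lifting, because the difference of two solutions lies in $H_{DS}^1$ and is $a$-orthogonal to $H_{DS}^1$, hence zero.

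\textbf{Step 2 (Odd steps \eqref{OS}).} Given $u^{(2k)}\in H^1(\Om)$, its trace on $L$ lies in $H^{\frac12}(L)$. We lift it to $w\in H^1(\Om)$ with $w|_L=u^{(2k)}|_L$; the simplest choice is $w=u^{(2k)}$ itself. We then solve for $v\in H_{DL}^1(\Om)$ exactly as in Step 1, with the roles of $S$ and $L$ interchanged. The Neumann datum $q_0\in H^{-\frac12}(S)$ gives a bounded functional, and Poincar\'e on $H_{DL}^1$ uses $L\neq\emptyset$ with positive measure. Continuous dependence on $u^{(2k)}|_L$, $f$ and $q_0$ follows from the Lax--Milgram bound.

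\textbf{Step 3 (Even steps \eqref{ES}) and the main obstacle.} The Lax--Milgram part is identical to Step 1 with $p^{(0)}$ replaced by $\frac{\pa u^{(2k+1)}}{\pa n}\big|_L$. The delicate point, and the main obstacle, is to give meaning to this conormal derivative as an element of $H^{-\frac12}(L)$ when $u^{(2k+1)}$ is only in $H^1(\Om)$. We plan to define it weakly via Green's formula: for $g$ in the trace space on $L$ of $H_{DS}^1(\Om)$, take any extension $\phi\in H_{DS}^1(\Om)$ with $\phi|_L=g$ and set
$$\Big\langle \frac{\pa u^{(2k+1)}}{\pa n},g\Big\rangle_L:=\int_\Om\Gr u^{(2k+1)}\Gr\phi\dx-\int_\Om f\phi\dx .$$
This is independent of the extension because $-\De u^{(2k+1)}=f$ weakly: the difference of two extensions lies in $H_0^1(\Om)\subset H_{DL}^1(\Om)$, where the weak formulation of \eqref{OS} holds and its boundary term vanishes. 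Boundedness by $C\|g\|$, with $C$ depending on $\|u^{(2k+1)}\|_{H^1}$ and $\|f\|_{L^2}$, follows from a bounded right inverse of the trace. This places the even-step Neumann datum in the dual space used in Step 1, so Step 1's argument applies verbatim. Induction on $k$ then gives well-posedness of the whole scheme. One technical care point is that the duality on $L$ is really with $H^{\frac12}_{00}(L)$, the space of traces of functions vanishing on $S$. We will note that this is exactly what is needed, or equivalently interpret $H^{-\frac12}(L)$ as its dual.
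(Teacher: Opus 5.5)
Your proposal is correct and takes the same route as the paper: Lax--Milgram on $H^1_{DS}(\Om)$ and $H^1_{DL}(\Om)$, with the even-step Neumann datum given meaning by a Green's-formula (extension-independent) definition of the conormal derivative, which is the role of Proposition \ref{Pr A.1} applied with $S$ and $L$ interchanged. Your version is more explicit than the paper's one-line proof, since it writes out the liftings, the Poincar\'e inequality with the implicit positive-measure requirement on $S$ and $L$, and the $H^{\frac12}_{00}$ duality, all of which the paper leaves unstated.
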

\begin{proof}
   Proposition \ref{Pr A.1} shows that the choice of boundary conditions at each iterative step makes sense.
    By applying the Lax-Milgram lemma (see Theorem 3.1.4 in \cite{kesavan}), we obtain the well-posedness of Problem \eqref{IS}--\eqref{ES}.
\end{proof}
\section{Convergence analysis}

In this section, we wish to describe the convergence of the iterative scheme proposed in the previous section. We describe several important properties that are essential for understanding how the iterative scheme works. We then prove that the approximating sequence generated by the iterative scheme converges to the exact solution of the main problem. 

It is worth noting that the convergence study of this iterative scheme has been investigated before by other authors. In the framework of this work, we wish to provide a new proof based on the method of alternating orthogonal projections (often associated with the name of John von Neumann).
\subsection{Auxiliary results on approximating errors}
By our assumption, $u$ is an exact solution to problem \eqref{mainprob}. Let $u^{(k)}$ be a weak solution of \eqref{IS}--\eqref{ES} $\text{for all } k\in \N\cup\{0\}$.\\ We define the approximating errors $e^{(k)}:=u^{(k)}-u$ $\text{for all } k\in \N\cup\{0\}$.\\
For all $k\in \N\cup\{0\}$, $e^{(k)}$ satisfy the following equations:
 \begin{equation}
     \begin{aligned}\label{ini.erro}
    -\De e^{(0)}&=0 \hspace{1.79cm}\text{in }\Om,\\
   e^{(0)} &=0\hspace{1.8cm}\text{on } S,\\
    \frac{\pa e^{(0)}}{\pa n}&=p^{(0)}-\frac{\pa u}{\pa n}\hspace{5mm}\text{on }L,
\end{aligned}
 \end{equation}

  \begin{equation}
       \begin{aligned}\label{odderro}
    -\De e^{(2k+1)}&=0 \hspace{10mm}\text{in }\Om,\\
    e^{(2k+1)}&=e^{(2k)}\hspace{5mm}\text{on } L,\\
    \frac{\pa e^{(2k+1)}}{\pa n}&=0\hspace{10.5mm}\text{on } S,
\end{aligned}
  \end{equation}
and
\begin{equation}
    \begin{aligned}\label{eve.erro}
    -\De e^{(2k+2)}&=0 \hspace{17.5mm}\text{in }\Om,\\
    e^{(2k+2)}&=0\hspace{17.8mm}\text{on }S,\\
    \frac{\pa e^{(2k+2)}}{\pa n}&=\frac{\pa e^{(2k+1)}}{\pa n}\hspace{6.5mm}\text{on } L.
\end{aligned} 
\end{equation} 
\begin{lemma}\label{Le 3.1.1}
    Let  \ref{A_1}--\ref{A_3} and \ref{B} hold. If  $e^{(k)}$ satisfies \eqref{ini.erro}--\eqref{eve.erro} $\text{for all}\, k\in\N\cup\{0\}$, then $\|\Gr e^{(k)}\|_{L^2(\Om)}$ is monotonically decreasing with respect to $k\in \N\cup \{0\}$.
\end{lemma}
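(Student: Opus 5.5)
We prove the two inequalities $\|\Gr e^{(2k+1)}\|_{L^2(\Om)}\le\|\Gr e^{(2k)}\|_{L^2(\Om)}$ and $\|\Gr e^{(2k+2)}\|_{L^2(\Om)}\le\|\Gr e^{(2k+1)}\|_{L^2(\Om)}$ separately. In each case we write the weak form of the error equations \eqref{odderro} and \eqref{eve.erro}, test with a suitable difference of consecutive errors, and conclude with Cauchy--Schwarz. This is the standard energy argument; it also foreshadows the projection structure, since each step turns out to be a Pythagorean decomposition.

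\textbf{Odd step.} The weak form of \eqref{odderro} reads
\begin{equation*}
\int_\Om \Gr e^{(2k+1)}\cdot\Gr\psi\dx=0\quad\text{for all }\psi\in H^1_{DL}(\Om).
\end{equation*}
It follows from Definition \ref{2.2.2} by subtracting the identity satisfied by $u$, which is tested against $\psi\in H^1_{DL}(\Om)$ and uses $\pa u/\pa n=q_0$ on $S$. Since $e^{(2k+1)}=e^{(2k)}$ on $L$, we may take $\psi=e^{(2k)}-e^{(2k+1)}\in H^1_{DL}(\Om)$. This gives $\int_\Om \Gr e^{(2k+1)}\cdot\Gr e^{(2k)}\dx=\|\Gr e^{(2k+1)}\|^2_{L^2(\Om)}$, and Cauchy--Schwarz then yields the first inequality. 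Equivalently, $\|\Gr e^{(2k)}\|^2=\|\Gr e^{(2k+1)}\|^2+\|\Gr(e^{(2k)}-e^{(2k+1)})\|^2$.

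\textbf{Even step.} The weak form of \eqref{eve.erro} reads
\begin{equation*}
\int_\Om \Gr e^{(2k+2)}\cdot\Gr\phi\dx=\int_L \frac{\pa e^{(2k+1)}}{\pa n}\phi\dsi\quad\text{for all }\phi\in H^1_{DS}(\Om).
\end{equation*}
We interpret the right-hand side via Green's formula for the harmonic function $e^{(2k+1)}$. Because $\pa e^{(2k+1)}/\pa n=0$ on $S$ and $\phi=0$ on $S$, it equals $\int_\Om \Gr e^{(2k+1)}\cdot\Gr\phi\dx$. Hence $\int_\Om\Gr(e^{(2k+2)}-e^{(2k+1)})\cdot\Gr\phi\dx=0$ for every $\phi\in H^1_{DS}(\Om)$. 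Testing with $\phi=e^{(2k+2)}\in H^1_{DS}(\Om)$, which vanishes on $S$, gives $\|\Gr e^{(2k+2)}\|^2=\int_\Om\Gr e^{(2k+1)}\cdot\Gr e^{(2k+2)}\dx$, and Cauchy--Schwarz gives the second inequality.

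\textbf{Main obstacle.} The delicate point is justifying the Green's formula step at the $H^1$ level. The normal derivative $\pa e^{(2k+1)}/\pa n$ is only an element of $H^{-1/2}(\pa\Om)$, and the integral over $L$ must be understood as a duality pairing with traces of $H^1_{DS}$ functions, which vanish on $S$. To handle this, we define the normal derivative of an $H^1$ harmonic function as the functional $g\mapsto\int_\Om\Gr w\cdot\Gr Eg\dx$, where $E$ is any $H^1$ extension of $g$. This is consistent with Proposition \ref{Pr A.1}, which legitimizes the boundary data in the scheme. With this definition the identity holds by construction. The only remaining checks are that harmonicity makes the functional independent of the extension, and that the zero Neumann condition on $S$ lets us restrict the pairing to $L$.
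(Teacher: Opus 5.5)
Your proposal is correct and follows the paper's own proof. Both arguments test the odd-step weak form with the difference of consecutive errors in $H^1_{DL}(\Omega)$. Both then use Green's formula, with $\partial e^{(2k+1)}/\partial n=0$ on $S$, to turn the even-step right-hand side into $\int_\Omega \nabla e^{(2k+1)}\cdot\nabla\phi\,dx$, test with $\phi=e^{(2k+2)}$, and conclude with Cauchy--Schwarz in both steps. Your variational definition of the normal derivative makes rigorous a Green's-formula step that the paper does not justify at the $H^1$ level, but it does not change the route.
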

\begin{proof}
    Let $\psi\in H_{DL}^1(\Om)$. From \eqref{odderro}, we get
\begin{align*}
    \underset{\Om}{\int} \Gr e^{(2k+1)}\Gr\psi\dx=0\hspace{1cm}\text{for all }\psi\in H_{DL}^1(\Om)
\end{align*}
with $e^{(2k+1)}=e^{(2k)}$ on $L$.\\
Take $\psi:=e^{(2k+1)}-e^{(2k)}\in H_{DL}^1(\Om)$.\\
Therefore, we obtain
$$\underset{\Om}{\int} \Gr e^{(2k+1)}\Gr(e^{(2k+1)}-e^{(2k)})\dx=0$$
which gives
$$\underset{\Om}{\int} |\Gr e^{(2k+1)}|^2\dx=\underset{\Om}{\int} \Gr e^{(2k+1)}\Gr e^{(2k)}\dx.$$
Applying the Cauchy-Schwarz inequality, we obtain that

    \begin{align*}
 \|\Gr e^{(2k+1)}\|_{L^2(\Om)}^2&\leq \|\Gr e^{(2k+1)}\|_{L^2(\Om)}\|\Gr e^{(2k)}\|_{L^2(\Om)}.
\end{align*}
This implies that 
\begin{equation}\label{8}
    \|\Gr e^{(2k+1)}\|_{L^2(\Om)}\leq\|\Gr e^{(2k)}\|_{L^2(\Om)}\hspace{4mm}\text{for all }k\in \N\cup\{0\}.
\end{equation}
From \eqref{eve.erro}, we get
\begin{equation}\label{9}
    \underset{\Om}{\int}\Gr e^{(2k+2)}\Gr\phi\dx=\underset{L}{\int}\frac{\pa e^{(2k+1)}}{\pa n}\phi\dsi\hspace{1cm}\text{for all }\phi\in H_{DS}^1(\Om).
\end{equation}
Applying Green's formula, we can write
\begin{align*}
    \underset{\Om}{\int}\Gr e^{(2k+1)}\Gr\xi\dx&=\underset{\pa\Om}{\int}\frac{\pa e^{(2k+1)}}{\pa n}\xi\dsi \hspace{1cm}\text{for all }\xi\in H^1(\Om)\\
    &=\underset{L}{\int}\frac{\pa e^{(2k+1)}}{\pa n}\xi\dsi\hspace{1cm}\text{for all }\xi\in H^1(\Om).
\end{align*}
Taking $\phi\in H_{DS}^1(\Om)$, we get
\begin{equation}\label{10}
    \underset{\Om}{\int}\Gr e^{(2k+1)}\Gr\phi\dx=\underset{L}{\int}\frac{\pa e^{(2k+1)}}{\pa n}\phi\dsi.
\end{equation}
Together, \eqref{9} and \eqref{10} imply that
\begin{align*}
    \underset{\Om}{\int}\Gr e^{(2k+2)}\Gr\phi\dx=\underset{\Om}{\int}\Gr e^{(2k+1)}\Gr\phi\dx.
\end{align*}
Take $\phi:=e^{(2k+2)}\in H_{DS}^1$ to yield
\begin{align*}
    \underset{\Om}{\int}|\Gr e^{(2k+2)}|^2\dx&=\underset{\Om}{\int}\Gr e^{(2k+1)}\Gr e^{(2k+2)}\dx.
\end{align*}
Applying the Cauchy-Schwarz inequality, we obtain that
    \begin{align*}
     \|\Gr e^{(2k+2)}\|_{L^2(\Om)}^2&\leq \|\Gr e^{(2k+1)}\|_{L^2(\Om)}\|\Gr e^{(2k+2)}\|_{L^2(\Om)}.
\end{align*}
This implies that 
\begin{equation}\label{11}
    \|\Gr e^{(2k+2)}\|_{L^2(\Om)}\leq\|\Gr e^{(2k+1)}\|_{L^2(\Om)}\hspace{4mm}\text{for all }k\in \N\cup\{0\}.
\end{equation}
By \eqref{8} and \eqref{11} we see that
$$0\leq \cdots\cdots\leq\|\Gr e^{(k+1)}\|_{L^2(\Om)}\leq\|\Gr e^{(k)}\|_{L^2(\Om)}\leq\cdots\cdots\leq\|\Gr e^{(0)}\|_{L^2(\Om)}.$$
\end{proof}
\begin{lemma}\label{Le 3.1.2}
  Let \ref{A_1}--\ref{A_3} and \ref{B} hold. If $e^{(k)}$ satisfies \eqref{ini.erro}--\eqref{eve.erro} for every $k\in \N\cup \{0\}$, then 
\begin{enumerate}
      \item $\Gr e^{(k+1)}$ is orthogonal to $\Gr(e^{(k)}-e^{(k+1)})$ for all $k\in\N\cup \{0\}$.
      \item $\|\Gr (e^{(k)}-e^{(k+1)})\|_{L^2(\Om)}\to 0$ as $k \to \infty$.
  \end{enumerate}
\end{lemma}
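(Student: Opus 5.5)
The plan is to reuse the two variational identities already derived in the proof of Lemma \ref{Le 3.1.1} and turn them into orthogonality statements. Then a Pythagorean identity combined with the monotonicity from Lemma \ref{Le 3.1.1} gives a telescoping bound. Throughout, orthogonality is in $L^2(\Om)^d$, i.e. with respect to $\int_\Om \Gr v\cdot\Gr w\dx$.

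For part 1 we split into odd and even indices. For the passage $2k\to 2k+1$, Lemma \ref{Le 3.1.1} gives $\int_\Om \Gr e^{(2k+1)}\Gr\psi\dx=0$ for all $\psi\in H_{DL}^1(\Om)$. Since $e^{(2k+1)}=e^{(2k)}$ on $L$, the test function $\psi=e^{(2k)}-e^{(2k+1)}$ belongs to $H_{DL}^1(\Om)$. This gives $\Gr e^{(2k+1)}\perp \Gr(e^{(2k)}-e^{(2k+1)})$.

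For the passage $2k+1\to 2k+2$, the proof of Lemma \ref{Le 3.1.1} yields
\[
\int_\Om \Gr(e^{(2k+2)}-e^{(2k+1)})\Gr\phi\dx=0 \quad\text{for all }\phi\in H_{DS}^1(\Om).
\]
Taking $\phi=e^{(2k+2)}\in H_{DS}^1(\Om)$, which is allowed because $e^{(2k+2)}=0$ on $S$, gives $\Gr e^{(2k+2)}\perp\Gr(e^{(2k+1)}-e^{(2k+2)})$. Together the two cases cover every $k$.

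For part 2 we first apply part 1 to the decomposition $e^{(k)}=e^{(k+1)}+(e^{(k)}-e^{(k+1)})$. Pythagoras then gives
\[
\|\Gr e^{(k)}\|_{L^2(\Om)}^2=\|\Gr e^{(k+1)}\|_{L^2(\Om)}^2+\|\Gr(e^{(k)}-e^{(k+1)})\|_{L^2(\Om)}^2 .
\]
Summing from $k=0$ to $N$ telescopes to
\[
\sum_{k=0}^N\|\Gr(e^{(k)}-e^{(k+1)})\|^2\le\|\Gr e^{(0)}\|^2<\infty .
\]
So the series converges and its general term tends to zero. Equivalently, $\|\Gr e^{(k)}\|$ is monotone decreasing and bounded below by Lemma \ref{Le 3.1.1}, hence convergent, and the difference of consecutive squares tends to $0$.

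The only delicate point is justifying the Green's formula step behind the identity for the even passage, namely $\int_\Om\Gr e^{(2k+1)}\Gr\phi\dx=\langle \pa_n e^{(2k+1)},\phi\rangle_L$ for $\phi\in H^1_{DS}(\Om)$. This requires the normal derivative of a weakly harmonic $H^1$ function to be well defined in $H^{-1/2}$, with the duality pairing replacing the integral. I take this from Lemma \ref{Le 3.1.1} and Proposition \ref{Pr A.1} as already established, so no new obstacle arises there.
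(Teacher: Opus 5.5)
Your proof is correct. Part (2) is the paper's argument: monotonicity from Lemma \ref{Le 3.1.1} plus the Pythagorean identity. Your telescoping sum additionally shows that $\sum_k\|\nabla(e^{(k)}-e^{(k+1)})\|_{L^2(\Omega)}^2\le\|\nabla e^{(0)}\|_{L^2(\Omega)}^2$.

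Part (1) runs in the opposite direction from the paper. The paper first proves the energy identities \eqref{13}--\eqref{14}, $\|\nabla e^{(k)}\|^2-\|\nabla e^{(k+1)}\|^2=\|\nabla(e^{(k)}-e^{(k+1)})\|^2$. It expands the square and moves the cross term to the boundary with Green's formula twice. For the odd step it uses $e^{(2k)}-e^{(2k+1)}=0$ on $L$ and $\partial e^{(2k+1)}/\partial n=0$ on $S$. For the even step it uses $\partial(e^{(2k+1)}-e^{(2k+2)})/\partial n=0$ on $L$ and $e^{(2k+2)}=0$ on $S$. Only then does it read off orthogonality.

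You instead get orthogonality directly, as Galerkin orthogonality, by inserting the admissible test functions $e^{(2k)}-e^{(2k+1)}\in H^1_{DL}(\Omega)$ and $e^{(2k+2)}\in H^1_{DS}(\Omega)$ into the weak formulations. Pythagoras then yields \eqref{13}--\eqref{14} immediately. In fact, the relations $\int_\Omega|\nabla e^{(2k+1)}|^2\dx=\int_\Omega\nabla e^{(2k+1)}\nabla e^{(2k)}\dx$ and $\int_\Omega|\nabla e^{(2k+2)}|^2\dx=\int_\Omega\nabla e^{(2k+1)}\nabla e^{(2k+2)}\dx$, already written in the proof of Lemma \ref{Le 3.1.1}, are exactly these orthogonality statements.

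Your route is shorter and more robust. It needs no boundary pairings on $S$, such as $\partial e^{(2k)}/\partial n$ against $e^{(2k)}-e^{(2k+1)}$. Splitting $\int_{\partial\Omega}$ into separate $S$- and $L$-pieces in the $H^{\pm1/2}$ setting would require extra justification. The only boundary-term input you use is \eqref{9}--\eqref{10}, which the paper needs anyway. The paper's computation has the modest advantage of displaying the energy-decrease identity itself as the outcome, since that is what part (2) actually uses. Otherwise it is a longer path to the same conclusion.
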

\begin{proof}[Proof (1)]
The following identity holds for all $k\in\N \cup \{0\}$:
    \begin{align*}
    &\|\Gr e^{(2k)}\|_{L^2(\Om)}^2- \|\Gr e^{(2k+1)}\|_{L^2(\Om)}^2\\&= \|\Gr e^{(2k)}\|_{L^2(\Om)}^2- \|\Gr e^{(2k)}-\Gr(e^{(2k)}-e^{(2k+1)})\|_{L^2(\Om)}^2\\
    &= \|\Gr e^{(2k)}\|_{L^2(\Om)}^2- \|\Gr e^{(2k)}\|_{L^2(\Om)}^2+2\underset{\Om}{\int}\Gr e^{(2k)}\Gr (e^{(2k)}-e^{(2k+1)})\dx\\
    &\hspace{4mm}- \|\Gr (e^{(2k)}-e^{(2k+1)})\|_{L^2(\Om)}^2\\
    &=2\underset{\Om}{\int}\Gr e^{(2k)}\Gr (e^{(2k)}-e^{(2k+1)})\dx- \|\Gr (e^{(2k)}-e^{(2k+1)})\|_{L^2(\Om)}^2.
\end{align*}
This implies that
\begin{equation}
   \begin{aligned}\label{12}
       & \|\Gr e^{(2k)}\|_{L^2(\Om)}^2- \|\Gr e^{(2k+1)}\|_{L^2(\Om)}^2\\
       &=2\underset{\Om}{\int}\Gr e^{(2k)}\Gr (e^{(2k)}-e^{(2k+1)})\dx- \|\Gr (e^{(2k)}-e^{(2k+1)})\|_{L^2(\Om)}^2.
   \end{aligned}
\end{equation}
Reformulating the last term of \eqref{12} by applying Green's formula, and using the facts that  $e^{(2k)}-e^{(2k+1)}=0$ on $L$ and $\frac{\pa e^{(2k+1)}}{\pa n}=0$ on $S$, we obtain
\begin{align*}
    &2\underset{\Om}{\int}\Gr e^{(2k)}\Gr (e^{(2k)}-e^{(2k+1)})\dx- \|\Gr (e^{(2k)}-e^{(2k+1)})\|_{L^2(\Om)}^2\\
    &=2\underset{S}{\int}\frac{\pa e^{(2k)}}{\pa n}(e^{(2k)}-e^{(2k+1)})\dsi-\|\Gr (e^{(2k)}-e^{(2k+1)})\|_{L^2(\Om)}^2\\
    &=2\underset{S}{\int}\frac{\pa}{\pa n}(e^{(2k)}-e^{(2k+1)})(e^{(2k)}-e^{(2k+1)})\dsi-\|\Gr (e^{(2k)}-e^{(2k+1)})\|_{L^2(\Om)}^2\\
    &=2\underset{\Om}{\int}|\Gr(e^{(2k)}-e^{(2k+1)})|^2\dx-\|\Gr (e^{(2k)}-e^{(2k+1)})\|_{L^2(\Om)}^2\\
    &=\|\Gr (e^{(2k)}-e^{(2k+1)})\|_{L^2(\Om)}^2.
\end{align*}
Hence, for all $k\in \N\cup\{0\}$, we obtain the first key identity
\begin{equation}\label{13}
    \|\Gr e^{(2k)}\|_{L^2(\Om)}^2- \|\Gr e^{(2k+1)}\|_{L^2(\Om)}^2=\|\Gr (e^{(2k)}-e^{(2k+1)})\|_{L^2(\Om)}^2.
\end{equation}
Proceeding similarly, for all $k\in \N\cup\{0\}$ we have
\begin{align*}
   &\|\Gr e^{(2k+1)}\|_{L^2(\Om)}^2- \|\Gr e^{(2k+2)}\|_{L^2(\Om)}^2\\
    &= \|\Gr e^{(2k+1)}\|_{L^2(\Om)}^2- \|\Gr e^{(2k+1)}-\Gr(e^{(2k+1)}-e^{(2k+2)})\|_{L^2(\Om)}^2\\
    &= \|\Gr e^{(2k+1)}\|_{L^2(\Om)}^2- \|\Gr e^{(2k+1)}\|_{L^2(\Om)}^2+2\underset{\Om}{\int}\Gr e^{(2k+1)}\Gr (e^{(2k+1)}-e^{(2k+2)})\dx\\
    &\hspace{4mm}- \|\Gr (e^{(2k+1)}-e^{(2k+2)})\|_{L^2(\Om)}^2\\
    &=2\underset{\Om}{\int}\Gr e^{(2k+1)}\Gr (e^{(2k+1)}-e^{(2k+2)})\dx- \|\Gr (e^{(2k+1)}-e^{(2k+2)})\|_{L^2(\Om)}^2,
\end{align*}
since $\frac{\pa}{\pa n} (e^{(2k+1)}-e^{(2k+2)})=0$ on $L$, and $e^{(2k+2)}=0$ on $S$, we consequently obtain 
\begin{align*}
     & 2\underset{\Om}{\int}\Gr e^{(2k+1)}\Gr (e^{(2k+1)}-e^{(2k+2)})\dx- \|\Gr (e^{(2k+1)}-e^{(2k+2)})\|_{L^2(\Om)}^2\\
     &=2\underset{S}{\int}\frac{\pa}{\pa n}(e^{(2k+1)}-e^{(2k+2)}) e^{(2k+1)}\dsi - \|\Gr (e^{(2k+1)}-e^{(2k+2)})\|_{L^2(\Om)}^2\\
     &=2\underset{S}{\int}\frac{\pa}{\pa n}(e^{(2k+1)}-e^{(2k+2)})( e^{(2k+1)}-e^{2k+2)})\dsi - \|\Gr (e^{(2k+1)}-e^{(2k+2)})\|_{L^2(\Om)}^2\\
     &=2\underset{\Om}{\int}|\Gr (e^{(2k+1)}-e^{(2k+2)})|^2\dx- \|\Gr (e^{(2k+1)}-e^{(2k+2)})\|_{L^2(\Om)}^2\\
     &=\|\Gr (e^{(2k+1)}-e^{(2k+2)})\|_{L^2(\Om)}^2.
\end{align*}
Hence, for all $k\in \N\cup\{0\}$, we obtain the second key identity
\begin{equation}\label{14}
    \|\Gr e^{(2k+1)}\|_{L^2(\Om)}^2- \|\Gr e^{(2k+2)}\|_{L^2(\Om)}^2=\|\Gr (e^{(2k+1)}-e^{(2k+2)})\|_{L^2(\Om)}^2.
\end{equation}
Together, \eqref{13} and \eqref{14} imply that
\begin{align*}
    \|\Gr e^{(k)}\|_{L^2(\Om)}^2- \|\Gr e^{(k+1)}\|_{L^2(\Om)}^2=\|\Gr (e^{(k)}-e^{(k+1)})\|_{L^2(\Om)}^2 \hspace{4mm}\text{ for all } k\in\N\cup\{0\}.
\end{align*}
Hence, $\Gr e^{(k+1)}$ is orthogonal to $\Gr(e^{(k)}-e^{(k+1)})$ $\text{for all } k\in \N\cup \{0\}$.
\end{proof}
\begin{proof}[Proof (2)]
    By Lemma \ref{Le 3.1.1} and  Lemma \ref{Le 3.1.2} $(1)$, we obtain that 
    $$\|\Gr (e^{(k)}-e^{(k+1)})\|_{L^2(\Om)}\to 0\hspace{7mm}\text{as}\hspace{2mm}k\to\infty.$$
\end{proof}
We now connect the proposed iterative scheme with the method of alternating orthogonal projections. In this context, we present the following useful proposition, originally proved by J. von Neumann \cite[Chapter 13, Theorem 13.7]{vNeu1950}.
\begin{proposition}\label{Pr 3.1.3}
    Let $\mathcal{H}$ be a Hilbert space, and let $V_D$ and $V_N$ be closed subspaces of $\mathcal{H}$. Denote by $P_{V_D}$ and $P_{V_N}$ the orthogonal projections of $\mathcal{H}$ onto $V_D$ and $V_N$, respectively. Then, for every $x\in\mathcal{H}$,
    $$\underset{n\to\infty}{\text{lim}}\|(P_{V_D}\circ P_{V_N})^n(x)-P_{V_D\cap V_N}(x)\|_{\mathcal{H}}=0,$$
    where $P_{V_D\cap V_N}$ is the orthogonal projection of $\mathcal{H}$ onto the intersection $V_D\cap V_N$. 
\end{proposition}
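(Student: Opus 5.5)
Let $x_n:=(P_{V_D}P_{V_N})^n x$ and $W:=V_D\cap V_N$. The plan is to reduce to the case $P_W x=0$, then obtain strong convergence from a Pythagoras-type energy identity together with an argument controlling the limit of the norms.

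\emph{Reduction.} Since $W$ lies in both $V_D$ and $V_N$, we have $P_{V_D}P_W=P_{V_N}P_W=P_W$. Taking adjoints, $P_WP_{V_D}=P_WP_{V_N}=P_W$. Writing $x=P_Wx+y$ with $y\in W^\perp$, we get $x_n=P_Wx+(P_{V_D}P_{V_N})^ny$. Moreover both projections map $W^\perp$ into itself: if $y\perp W$ then $P_W P_{V_N}y=P_Wy=0$. So it suffices to show $(P_{V_D}P_{V_N})^ny\to0$ whenever $y\perp W$.

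\emph{Energy identity.} Consider the interleaved sequence $z_0=y$, $z_{2k+1}=P_{V_N}z_{2k}$, $z_{2k+2}=P_{V_D}z_{2k+1}$. For an orthogonal projection $P$ we have $\|z\|^2-\|Pz\|^2=\|z-Pz\|^2$. This is exactly the computation carried out in Lemma \ref{Le 3.1.2}(1) in the concrete setting. Summing gives $\sum_k\|z_k-z_{k+1}\|^2\le\|y\|^2$. Hence $\|z_k\|$ decreases to some limit $c$, and $z_k-z_{k+1}\to0$.

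\emph{Main obstacle: showing $c=0$.} Weak cluster points are easy to control. If $z_{2k}\rightharpoonup w$ along a subsequence, then $w\in V_D$, since $V_D$ is weakly closed. Because $z_{2k+1}-z_{2k}\to0$, also $z_{2k+1}\rightharpoonup w$, so $w\in V_N$. Thus $w\in W\cap W^\perp=\{0\}$, and the whole sequence converges weakly to $0$. Weak convergence alone does not give norm convergence, so I would use von Neumann's self-adjointness trick. Let $T=P_{V_D}P_{V_N}P_{V_D}$, which is self-adjoint and positive with $0\le T\le I$. For $n\ge1$ we have $(P_{V_D}P_{V_N})^n=T^{n-1}P_{V_D}P_{V_N}$, so it suffices to show $T^ny'\to0$ for $y'\perp W$.

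Compute
\[
\|T^my'-T^ny'\|^2=\langle T^{2m}y',y'\rangle-2\langle T^{m+n}y',y'\rangle+\langle T^{2n}y',y'\rangle .
\]
The sequence $a_j=\langle T^jy',y'\rangle$ is decreasing and nonnegative, since $T$ is positive with norm at most $1$ (for instance by the spectral theorem, or because $a_j=\|T^{j/2}y'\|^2$). It therefore converges, and the right-hand side tends to $0$. So $T^ny'$ is Cauchy and converges strongly to some $v$.

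Then $Tv=v$. This forces $\|P_{V_N}P_{V_D}v\|=\|v\|$, which gives $v\in V_D$ and $v\in V_N$, that is, $v\in W$. On the other hand $W^\perp$ is invariant under $T$, so $v\in W^\perp$. Hence $v=0$, which finishes the proof. The identification of the fixed points of $T$ with $W$ via the norm-equality case is the delicate point I would check carefully.
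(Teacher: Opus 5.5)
Your proof is correct and complete. The paper gives no proof of this proposition: it quotes it as a black box from von Neumann's book (Chapter 13, Theorem 13.7). So there is no competing route in the text. What you supply is a self-contained proof, and it is essentially the classical von Neumann argument.

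The key mechanism is that inner products of iterates depend only on the total number of projections applied. You encode this through the positive contraction $T=P_{V_D}P_{V_N}P_{V_D}$. Indeed $T^k=(P_{V_D}P_{V_N})^kP_{V_D}$, so $\|T^my'-T^ny'\|^2$ is controlled by the monotone, convergent sequence $\langle T^jy',y'\rangle$.

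Your energy-identity paragraph is the abstract form of Lemma~\ref{Le 3.1.2}. You are right that it alone (monotone norms and $z_k-z_{k+1}\to 0$) gives only weak convergence. The self-adjointness trick is the additional ingredient, and the paper only obtains it implicitly by citing von Neumann.

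Two small remarks.

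\textbf{The step you flag as delicate is immediate.} From $Tv=v$ you get
$\|v\|^2=\langle Tv,v\rangle=\|P_{V_N}P_{V_D}v\|^2\le\|P_{V_D}v\|^2\le\|v\|^2$. Equality in the Pythagorean identity $\|v\|^2=\|P_{V_D}v\|^2+\|v-P_{V_D}v\|^2$ gives $v\in V_D$. The same argument with $P_{V_N}$ then gives $v\in V_N$.

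\textbf{You identify the limit twice, and one identification can go.} Since $T^{n-1}P_{V_D}P_{V_N}y=z_{2n}$, the strong limit $v$ of $T^{n}y'$ is also the strong limit of $z_{2n}$. It must therefore equal the weak limit $0$ you already obtained from the weak-cluster-point argument. Either that paragraph or the fixed-point step can be dropped without loss.
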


\subsection{Main result: Convergence of the iterative scheme}\label{sec 3.2}
In this section, we prove that the approximating sequence generated by the iterative scheme converges to the solution of the original problem \eqref{mainprob} by applying Proposition \ref{Pr 3.1.3}. To this end, we introduce a suitable Hilbert space and two closed subspaces in this Hilbert space, which allow us to apply Proposition \ref{Pr 3.1.3}. We introduce these spaces as follows.\\
We define the quotient space  $$\mathcal{H}:=H^1(\Om)/ \R.$$
The space $\mathcal{H}$ is a Hilbert space equipped with the inner product $$\langle v,w\rangle_\mathcal{H}:=\underset{\Om}{\int}\Gr v \Gr w \dx.$$ 
Throughout, elements of $\mathcal{H}$ are understood as equivalence classes modulo constants. Thus, whenever $m\in\mathcal{H}$, statements about $m$ are interpreted in the sense of its equivalence class.

We introduce the following two subspaces of $\mathcal{H}$, namely
\begin{equation}
    \label{VdDef}
    V_D:=\{v\in \mathcal{H}: -\De v=0\hspace{1mm}\text{weakly in}\hspace{1mm}\Om \hspace{1mm}\text{and}\hspace{1mm}  v=0 \hspace{1mm}\text{on}\hspace{1mm} S\}\footnote{We mean that every element of $V_D$ admits a unique representative in its equivalence class that vanishes on $S$. }
\end{equation}
and
\begin{equation}
    \label{VnDef}
    V_N:=\{v\in \mathcal{H}: -\De v=0\hspace{1mm}\text{weakly in}\hspace{1mm}\Om\hspace{1mm}\text{and}\hspace{1mm} \frac{\pa v}{\pa n}=0 \hspace{1mm}\text{on}\hspace{1mm} S\}.
\end{equation}
According to Proposition \ref{Pr A.2}, both $V_D$ and $V_N$ are closed subspaces of $\mathcal{H}$. Furthermore, by \ref{Th A.1}
\begin{equation}
     \begin{aligned}
     \label{VdVnIntersect}
    V_D\cap V_N&=\{v\in \mathcal{H}: -\De v=0\hspace{1mm}\text{weakly in}\hspace{1mm}\Om,v=0\hspace{1mm}\text{on}\hspace{1mm} S,\hspace{1mm}\text{and}\hspace{1mm}\frac{\pa v}{\pa n}=0 \hspace{1mm}\text{on}\hspace{1mm} S\}\\
    &=\{0\}.
\end{aligned}
\end{equation}

\begin{theorem}\label{Th 3.2.1}
  Assume \ref{A_1}--\ref{A_3} and \ref{B} hold. Let $u$ be an exact solution of problem \eqref{mainprob} and $u^{(k)}$ be a weak solution of \eqref{IS}--\eqref{ES} $\text{for all } k\in \N\cup\{0\}$. Then  $u^{(k)} \to u$ strongly in $H^1(\Om)$ as $k\to \infty$.
\end{theorem}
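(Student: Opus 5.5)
We will identify each step of the scheme with an orthogonal projection in $\mathcal{H}=H^1(\Om)/\R$, so that Proposition \ref{Pr 3.1.3} applies with $V_D\cap V_N=\{0\}$ from \eqref{VdVnIntersect}, and then upgrade convergence in $\mathcal{H}$ to convergence in $H^1(\Om)$.

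\textbf{Step 1: membership.} Work with the errors $e^{(k)}=u^{(k)}-u$. By \eqref{ini.erro} and \eqref{eve.erro}, every even error $e^{(2k)}$ is weakly harmonic and vanishes on $S$, so its class lies in $V_D$. By \eqref{odderro}, every odd error $e^{(2k+1)}$ is weakly harmonic with zero Neumann data on $S$, so its class lies in $V_N$.

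\textbf{Step 2: the odd step is $P_{V_N}$.} We show $[e^{(2k+1)}]=P_{V_N}[e^{(2k)}]$. Since $e^{(2k+1)}\in V_N$, it suffices to check $\langle e^{(2k)}-e^{(2k+1)},w\rangle_\mathcal{H}=0$ for all $w\in V_N$. Set $\psi:=e^{(2k)}-e^{(2k+1)}$, which lies in $H^1_{DL}(\Om)$. Take any $w\in V_N$. Because $w$ is harmonic with $\pa_n w=0$ on $S$, Green's formula gives
\[
\int_\Om \Gr w\,\Gr\psi\dx=\int_S \pa_n w\,\psi\dsi+\int_L \pa_n w\,\psi\dsi=0,
\]
since $\pa_n w=0$ on $S$ and $\psi=0$ on $L$. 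Equivalently, $\int_\Om\Gr w\,\Gr\psi\dx=0$ for all $\psi\in H^1_{DL}$ is exactly the weak form characterising $V_N$; this is the computation behind Lemma \ref{Le 3.1.2}(1).

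\textbf{Step 3: the even step is $P_{V_D}$.} We show $[e^{(2k+2)}]=P_{V_D}[e^{(2k+1)}]$. Here $\phi:=e^{(2k+1)}-e^{(2k+2)}$ is harmonic with $\pa_n\phi=0$ on $L$, by \eqref{eve.erro}. Take any $w\in V_D$, so $w=0$ on $S$. Then
\[
\int_\Om\Gr\phi\,\Gr w\dx=\int_S\pa_n\phi\, w\dsi+\int_L\pa_n\phi\, w\dsi=0,
\]
since $w=0$ on $S$ and $\pa_n\phi=0$ on $L$. In weak form this is \eqref{9}–\eqref{10}, which hold for all test functions in $H^1_{DS}$.

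\textbf{Step 4: apply Proposition \ref{Pr 3.1.3}.} Steps 2 and 3 give $[e^{(2n)}]=(P_{V_D}P_{V_N})^n[e^{(0)}]$. With $x=[e^{(0)}]$ and $V_D\cap V_N=\{0\}$, the proposition yields $\|\Gr e^{(2n)}\|_{L^2(\Om)}\to0$. For odd indices, $\|\Gr e^{(2n+1)}\|_{L^2(\Om)}\le\|\Gr e^{(2n)}\|_{L^2(\Om)}$ by Lemma \ref{Le 3.1.1}, so the whole sequence satisfies $\|\Gr e^{(k)}\|_{L^2(\Om)}\to0$.

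\textbf{Step 5: from gradients to $H^1$ (main obstacle).} The quotient kills constants, so gradient convergence alone does not give $H^1$ convergence. Even errors vanish on $S$, which has positive measure, so a Poincaré–Friedrichs inequality on $H^1_{DS}$ gives $\|e^{(2n)}\|_{H^1}\le C\|\Gr e^{(2n)}\|_{L^2}\to0$. Odd errors satisfy $e^{(2n+1)}=e^{(2n)}$ on $L$, so $e^{(2n+1)}-e^{(2n)}\in H^1_{DL}$. Poincaré on $H^1_{DL}$, which needs $|L|>0$ (this is where \ref{A_1} is used, interpreted as positive measure), gives
\[
\|e^{(2n+1)}-e^{(2n)}\|_{H^1}\le C\|\Gr(e^{(2n+1)}-e^{(2n)})\|_{L^2}\to0,
\]
using Lemma \ref{Le 3.1.2}(2), or simply the triangle inequality. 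Hence $e^{(k)}\to0$ in $H^1(\Om)$, i.e. $u^{(k)}\to u$.

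The technical care lies in Steps 2–3: making the Green's-formula pairings rigorous in $H^{\pm1/2}$ duality on the pieces $S$ and $L$, using Proposition \ref{Pr A.1}. I would avoid these traces altogether by arguing purely with the weak formulations in $H^1_{DS}$ and $H^1_{DL}$.
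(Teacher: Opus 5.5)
Your proposal is correct and follows essentially the same route as the paper. You identify the odd and even steps with $P_{V_N}$ and $P_{V_D}$ through the same Green's-formula (equivalently, weak-form) orthogonality, invoke von Neumann's theorem with $V_D\cap V_N=\{0\}$, and upgrade to $H^1(\Om)$ via Poincar\'e on $H^1_{DS}(\Om)$ for the even errors and on $H^1_{DL}(\Om)$ for $e^{(2n+1)}-e^{(2n)}$. The only difference is cosmetic: you control $\|\nabla(e^{(2n+1)}-e^{(2n)})\|_{L^2(\Om)}$ by the triangle inequality (or Lemma \ref{Le 3.1.2}(2)), whereas the paper tests the odd-step weak form with $\tilde{e}^{(2k+1)}$ to get $\|\nabla\tilde{e}^{(2k+1)}\|_{L^2(\Om)}\le\|\nabla e^{(2k)}\|_{L^2(\Om)}$.
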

\begin{proof}
   Recall that $e^{(k)}=u^{(k)}-u$ and that $e^{(k)}$ satisfies \eqref{ini.erro}--\eqref{eve.erro} $\text{for all } k\in \N\cup \{0\}$.\\
We work with the Hilbert space $\mathcal{H}$ and the two closed subspaces $V_D$ and $V_N$ introduced in Section \ref{sec 3.2}.\\
Let $$P_{V_N}:\mathcal{H}\to V_N$$ be the orthogonal projection onto $V_N$.\\
We claim that $$e^{(2k+1)}=P_{V_N}(e^{(2k)})\hspace{4mm}\text{for all }k\in \N\cup\{0\}.$$
Let $\xi\in V_N$. For all $k\in \N\cup\{0\}$, we can write
\begin{align*}
    \underset{\Om}{\int}\Gr(e^{(2k)}-e^{(2k+1)})\Gr\xi\dx&=\underset{\pa \Om}{\int}(e^{(2k)}-e^{(2k+1)})\frac{\pa\xi}{\pa n}\dsi\\
    &=\underset{L}{\int}(e^{(2k)}-e^{(2k+1)})\frac{\pa\xi}{\pa n}\dsi\hspace{8mm} \text{for all }\xi\in V_N.
\end{align*}
Since $ e^{(2k+1)}=e^{(2k)}$ on $L$, we obtain
\begin{equation}\label{15}
    \underset{\Om}{\int}\Gr(e^{(2k)}-e^{(2k+1)})\Gr\xi\dx=0\hspace{8mm} \text{for all }\xi\in V_N.
    \end{equation}
From \eqref{odderro}, we see that $e^{(2k+1)}\in V_N$.
    By \eqref{15} and the fact that $e^{(2k+1)}\in V_N$, we get 
    \begin{equation}\label{16}
        e^{(2k+1)}=P_{V_N}(e^{(2k)})\hspace{4mm}\text{for all }k\in \N\cup\{0\}.
    \end{equation}
    Let $$P_{V_D}:\mathcal{H}\to V_D$$ be the orthogonal projection onto $V_D$.\\
Proceeding similarly, we claim that 
$$e^{(2k+2)}=P_{V_D}(e^{(2k+1)})\hspace{4mm}\text{for all }k\in \N\cup\{0\}.$$
Let $\chi\in V_D$. For all $k\in \N\cup\{0\}$, we can write
\begin{align*}
    \underset{\Om}{\int}\Gr(e^{(2k+1)}-e^{(2k+2)})\Gr\chi\dx&=\underset{\pa \Om}{\int}\frac{\pa}{\pa n}(e^{(2k+1)}-e^{(2k+2)})\chi\dsi\\
    &=\underset{L}{\int}\frac{\pa}{\pa n}(e^{(2k+1)}-e^{(2k+2)})\chi\dsi\hspace{8mm} \text{for all }\chi\in V_D.
\end{align*}
Since $\frac{\pa e^{(2k+2)}}{\pa n}=\frac{\pa e^{(2k+1)}}{\pa n}$ on $L$, we obtain
\begin{equation}
    \begin{aligned}\label{17}
    \underset{\Om}{\int}\Gr(e^{(2k+1)}-e^{(2k+2)})\Gr\chi\dx&=0\hspace{8mm}\text{for all }\chi\in V_D.
\end{aligned}
\end{equation}
  From  \eqref{eve.erro}, we have $e^{(2k+2)}\in V_D$. By \eqref{17} and the fact that  $e^{(2k+2)}\in V_D$,we get 
  \begin{equation}\label{18}
      e^{(2k+2)}=P_{V_D}(e^{(2k+1)})\hspace{4mm}\text{for all }k\in \N\cup\{0\}.
  \end{equation}
 Finally, we can combine \eqref{16} and \eqref{18} to get
\begin{align*}
  e^{(2k+2)}&=P_{V_D}\circ P_{V_N}(e^{(2k)}) \\
  &=(P_{V_D}\circ P_{V_N})^2(e^{(2k-2)})\\
  &\hspace{1cm}\vdots\\
  &= (P_{V_D}\circ P_{V_N})^{k+1}(e^{(0)}).
\end{align*}
By Proposition \ref{Pr 3.1.3}, we obtain
\begin{equation}\label{19}
    (P_{V_D}\circ P_{V_N})^{k+1}(e^{(0)})\to P_{V_D\cap V_N}(e^{(0)})\hspace{6mm}\text{as}\hspace{1.5mm} k\to\infty.
\end{equation}
By \eqref{19} and \eqref{VdVnIntersect}, we deduce that
$$
    \lim_{k\rightarrow\infty}(P_{V_D}\circ P_{V_N})^{k+1}(e^{(0)})\to P_{V_D\cap V_N}(e^{(0)})=P_{\{0\}}(e^{(0)})=0
$$
in $\mathcal{H}$. Consequently, we also get
\begin{equation}\label{21}
    e^{(2k+2)}\to 0\hspace{2mm}\text{strongly in}\hspace{1mm}\mathcal{H} \hspace{7mm}\text{as}\hspace{1.5mm} k\to\infty.
\end{equation} 
Since $e^{(2k+1)}=P_{V_N}(e^{(2k)})$ and $e^{(2k+2)}\to 0$ strongly in $\mathcal{H}$ as $k\to\infty$, this implies that
\begin{equation}\label{22}
    e^{(2k+1)}\to 0\hspace{2mm}\text{strongly in}\hspace{1mm}\mathcal{H}\hspace{7mm}\text{as}\hspace{1.5mm} k\to\infty.
\end{equation}
By \eqref{21} and \eqref{22}, we get
\begin{equation}\label{23}
    e^{(k)}\to 0 \hspace{2mm}\text{strongly in}\hspace{1mm}\mathcal{H}\hspace{7mm}\text{as}\hspace{1.5mm} k\to\infty.
\end{equation}
It is not sufficient to show that $e^{(k)}\to 0$ strongly in $\mathcal{H}$ as $k\to 0$. We also need to establish that  $e^{(k)}\to 0$ strongly in $H^1(\Om)$ as $k\to\infty$.\\
Consequently, \eqref{23} leads to $$\|\Gr e^{(k)}\|_{L^2(\Om)}\to 0\hspace{7mm}\text{as}\hspace{1.5mm} k\to\infty.$$
Since $e^{(2k+2)}=0$ on $S$,  the Poincar\'e inequality implies that 
\begin{equation}\label{24}
    e^{(2k+2)}\to 0 \hspace{3mm} \text{strongly in}\hspace{2mm} H^1(\Om)\hspace{7mm}\text{as}\hspace{1.5mm} k\to\infty.
\end{equation}
We denote $\Tilde{e}^{(2k+1)}:=e^{(2k+1)}-e^{(2k)}$ with $\Tilde{e}^{(2k+1)}\in H^1(\Om)$. By \eqref{odderro}, we get
$$\underset{\Om}{\int}\Gr(\Tilde{e}^{(2k+1)}+e^{(2k)})\Gr\psi\dx=0\hspace{1cm}\text{for all } \psi\in H_{DL}^1(\Om).$$
Taking $\psi=\Tilde{e}^{(2k+1)}\in H_{DL}^1$ in the last expression, we obtain
\begin{align*}
    \underset{\Om}{\int}\Gr(\Tilde{e}^{(2k+1)}+e^{(2k)})\Gr\Tilde{e}^{(2k+1)}\dx&=0.
    \end{align*}
    Rearranging the terms and applying the Cauchy-Schwarz inequality, we get
    \begin{align*}
    \underset{\Om}{\int}|\Gr\Tilde{e}^{(2k+1)}|^2\dx&\leq|\underset{\Om}{\int}\Gr e^{(2k)}\Gr\Tilde{e}^{(2k+1)}|\dx\\
    \|\Gr\Tilde{e}^{(2k+1)}\|_{L^2(\Om)}^2&\leq \|\Gr e^{(2k)}\|_{L^2(\Om)} \|\Gr\Tilde{e}^{(2k+1)}\|_{L^2(\Om)}.
\end{align*}
This implies that
\begin{equation}\label{25}
    \|\Gr\Tilde{e}^{(2k+1)}\|_{L^2(\Om)}\leq\|\Gr e^{(2k)}\|_{L^2(\Om)}.
\end{equation}
Applying the Poincar\'e inequality on $\Tilde{e}^{(2k+1)}$ and combining it with \eqref{24} and \eqref{25}, we get
\begin{equation}\label{26}
    \Tilde{e}^{(2k+1)}\to 0 \hspace{3mm} \text{strongly in}\hspace{2mm} H^1(\Om)\hspace{7mm}\text{as}\hspace{1.5mm} k\to\infty.
\end{equation} 
By \eqref{24} and \eqref{26}, and by setting $\Tilde{e}^{(2k+1)}=e^{(2k+1)}-e^{(2k)}$,we deduce that 
\begin{equation}\label{27}
    e^{(2k+1)}\to 0 \hspace{3mm} \text{strongly in}\hspace{2mm} H^1(\Om) \hspace{7mm}\text{as}\hspace{1.5mm} k\to\infty.
\end{equation}
By \eqref{24} and \eqref{27}, we finally see $$e^{(k)}\to 0 \hspace{2mm}\text{strongly in}\hspace{1mm}H^1(\Om)\hspace{7mm}\text{as}\hspace{1.5mm} k\to\infty,$$
which equivalently gives $$u^{(k)}\to u\hspace{2mm}\text{strongly in}\hspace{1mm}H^1(\Om)\hspace{7mm}\text{as}\hspace{1.5mm} k\to\infty.$$ This concludes the proof of our main result.
\end{proof} 

\section{In search of convergence rates}\label{se 4}
For practical application of the described iterative method, it is relevant to know what convergence rate is to be expected. We will address this matter using the concept of \emph{Friedrichs angle} (or the \emph{Friedrichs cosine}). Although this is a general quantity that can be computed for any pair of closed subspaces of a Hilbert space, we shall give the definition only in the special case of $V_D,V_N\subset\mathcal{H}$. Recall that $V_D, V_N$ are defined by (\ref{VdDef}) and (\ref{VnDef}). Denote by 
$$
A:=\left\{(u,v): u\in V_D \text{ with } \|\Gr u\|_{L^2(\Om)}\leq1\text{ and } v\in V_N \text{ with } \|\Gr v\|_{L^2(\Om)}\leq1\right\}.
$$
The Friedrichs angle of $V_D,V_N$ is, in our context, given by
\begin{equation}
    \nonumber
    c(V_D,V_N):=\sup_{(u,v)\in A}\left|\int_\Omega\Gr u\Gr v\dx\right|.
\end{equation}


 
Interestingly, the theorem of von Neumann on the convergence of alternating orthogonal projections also admits the following quantitative estimate (see \cite[Theorem 1.2]{Heinz} or Aronszajn  \cite[p.~378--379]{Aronszajn}):
\begin{equation}\label{errorbound}
    \|(P_{V_D}\circ P_{V_N})^n(x)-P_{V_D\cap V_N}(x)\|_{\mathcal{H}}\leq c(V_D,V_N)^{2n-1}\|x\|_{\mathcal{H}}\hspace{1cm}\text{for } x\in\mathcal{H}\text{ and }n\in\N.
\end{equation}

Consequently,
\begin{equation}
    \label{rateError}
    \|e^{(2n+2)}\|_\mathcal{H}\le c(V_D,V_N)^{2n+1}\|e^{(0)}\|_\mathcal{H}.
\end{equation}
Of course, if $c(V_D,V_N)<1$, then (\ref{rateError}) provides a geometrical convergence rate of the iteration error. Unfortunately, it is easy to give examples of $\Omega$ where $c(V_D,V_N)=1$ (see Proposition \ref{exampleProp} below). Therefore, in general, no convergence rate of $e^{(n)}$ is to be expected. 

Elaborating slightly, it can be shown \cite[Theorem~2]{weinert} that
\begin{equation}
    \label{operatorNorm}
    \|(P_{V_D}\circ P_{V_N})^n-P_{V_D\cap V_N}\|_{\textrm{Op}}=c(V_D,V_N)^{2n-1}\quad(n\in\mathbb{N}),
\end{equation}
where $\|T\|_{\textrm Op}=\sup\|Th\|_\mathcal{H}/\|h\|_\mathcal{H}$. Therefore, for each $n\in\mathbb{N}$, there is an initial error $e^{(0)}$ such that $\|e^{(2n+2)}\|_\mathcal{H}=c(V_D,V_N)^{2n+1}\|e^{(0)}\|_\mathcal{H}$. This together with the fact that we may have $c(V_D,V_N)=1$ shows that it is possible that for each $n\in\mathbb{N}$ there holds 
$$
\sup_{e^{(0)}\in\mathcal{H}}\frac{\|e^{(2n+2)}\|_\mathcal{H}}{\|e^{(0)}\|_\mathcal{H}}=1.
$$
In other words, there is an $\Omega$ with the property that for any $n\in\mathbb{N}$, there is an initial error $e^{(0)}$ such that the $(2n+2)$-th error has \emph{no improvement} compared to the initial error.
Formulated differently, given any $\epsilon>0$ and any $n\in\mathbb{N}$, there

This appears to be an obstruction inherent to the alternating projection theorem applied to our specific problem.

\begin{proposition}
\label{exampleProp}
   There exists a bounded domain $\Om\subset \R^2$  such that
   $$ c(V_D,V_N)=1,$$ where $V_D$ and $V_N$ are defined as in Section \ref{sec 3.2}.
\end{proposition}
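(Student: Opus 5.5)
Our plan is to take $\Om$ to be a rectangle $(0,a)\times(0,1)$ (the dimensions do not matter), with $S$ the bottom side $\{y=0\}$ and $L$ the other three sides. We then build explicit sequences $u_m\in V_D$ and $v_m\in V_N$ of harmonic functions whose normalized gradients become asymptotically parallel. Since the Friedrichs cosine always satisfies $c(V_D,V_N)\le 1$ (Cauchy--Schwarz), it suffices to show
$$
\frac{\left|\int_\Om \Gr u_m\Gr v_m\dx\right|}{\|\Gr u_m\|_{L^2(\Om)}\|\Gr v_m\|_{L^2(\Om)}}\to 1 .
$$
Because $V_D\cap V_N=\{0\}$ by \eqref{VdVnIntersect}, we cannot take $u_m=v_m$. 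What we need is a family that nearly lies in both spaces.

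The natural candidates are separated harmonic functions in Fourier modes along $S$. We set
$$
u_m(x,y)=\sin(m\pi x/a)\sinh(m\pi y/a),\qquad v_m(x,y)=\sin(m\pi x/a)\cosh(m\pi y/a).
$$
Both are harmonic. Moreover $u_m=0$ on $S$, so $u_m\in V_D$, and $\partial_n v_m=-\partial_y v_m=0$ on $S$, so $v_m\in V_N$. The boundary data on $L$ are unconstrained in both spaces, so membership is immediate; we only need $H^1(\Om)$-regularity, which is clear for smooth functions on the closed rectangle. For large $m$, both functions are dominated by $\tfrac12\sin(m\pi x/a)e^{m\pi y/a}$, so their difference is concentrated near $S$, where both are exponentially small compared with their size near $y=1$.

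The key computation is to evaluate $\|\Gr u_m\|^2$, $\|\Gr v_m\|^2$ and $\int\Gr u_m\Gr v_m$ explicitly, writing $k=m\pi/a$. Using $|\Gr(\sin kx\,\sinh ky)|^2=k^2(\cos^2 kx\sinh^2 ky+\sin^2 kx\cosh^2 ky)$, integrating in $x$ gives a factor $a/2$ on each term. This leads to
$$
\|\Gr u_m\|^2=\tfrac{a k^2}{2}\int_0^1\cosh(2ky)\,dy=\tfrac{ak}{4}\sinh 2k=\|\Gr v_m\|^2 .
$$
The mixed term is
$$
\int_\Om\Gr u_m\Gr v_m\dx=\tfrac{ak^2}{2}\int_0^1 2\sinh ky\cosh ky\,dy=\tfrac{ak}{4}(\cosh 2k-1).
$$
Hence the ratio equals $(\cosh 2k-1)/\sinh 2k=\tanh k\to1$ as $m\to\infty$, which gives $c(V_D,V_N)=1$.

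The main obstacle is only bookkeeping: making sure the functions are genuinely admissible elements of the quotient space $\mathcal{H}$ and of the weak definitions in \eqref{VdDef}--\eqref{VnDef}. We do not expect this to be a real difficulty, and no hard estimate is involved. The rectangle has a piecewise $C^1$ boundary, and $S,L$ are nonempty and disjoint, so assumption \ref{A_1} holds; whether the corners are assigned to $S$ or to $L$ is irrelevant, since they have measure zero. If a smooth domain is preferred, the same argument should work on a stadium-type domain with a flat piece of boundary taken as $S$, using cut-offs, but the rectangle already suffices for the statement.
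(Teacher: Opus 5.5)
Your proof is correct. It uses essentially the paper's example: the unit square, $S$ the bottom side, and the pair $\sin(nx)\sinh(ny)\in V_D$, $-\sin(nx)\cosh(ny)\in V_N$. The logic of the argument is different, though.

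The paper never computes the cosine. It cites two facts: that $c(V_D,V_N)<1$ if and only if $V_D+V_N$ is closed, and that closedness is equivalent to an estimate $\|u\|_{\mathcal H}\le K\|u+v\|_{\mathcal H}$ for all $u\in V_D$, $v\in V_N$. It then refutes this estimate by showing that $\|\Gr u_n\|^2_{L^2(\Om)}/\|\Gr(u_n+v_n)\|^2_{L^2(\Om)}$ grows like $e^{2n}$.

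You work directly from the definition of $c(V_D,V_N)$ as a supremum. Your choice of frequencies $k=m\pi/a$ makes the $x$-integrals exactly $a/2$, so you get the exact value $\tanh k$ of the normalized inner product, with no estimates and no external theorems.

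What each route buys: yours is self-contained and quantitative, since the cosine approaches $1$ like $1-\tanh k\sim 2e^{-2k}$. The paper's highlights the structural reason behind $c=1$, namely that $V_D+V_N$ fails to be closed.

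The two are really the same computation. Since $\|\Gr u_m\|_{L^2(\Om)}=\|\Gr v_m\|_{L^2(\Om)}$, you have $\|\Gr(u_m-v_m)\|^2_{L^2(\Om)}=2\|\Gr u_m\|^2_{L^2(\Om)}(1-\tanh k)$. Hence $\|\Gr u_m\|^2_{L^2(\Om)}/\|\Gr(u_m-v_m)\|^2_{L^2(\Om)}=(e^{2k}+1)/4$, which is exactly the paper's blow-up. Conversely, the paper only needs the elementary bound $\|u+v\|^2_{\mathcal H}\ge(1-c^2)\|u\|^2_{\mathcal H}$. This follows from $|\langle u,v\rangle_{\mathcal H}|\le c\,\|u\|_{\mathcal H}\|v\|_{\mathcal H}$ by completing the square, so both of its citations could be avoided as well.
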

\begin{proof}
The sum $V_D+V_N$ is closed if and only if $c(V_D,V_N)<1$   (see \cite[Lemma 1.3]{Heinz}). Thus, it suffices to give an explicit example of $\Omega$ and $S$ such that the corresponding space $V_D+V_N$ is not closed. To this end, 
let $\Om:=[0,1]\times[0,1]$, $S:=\{(x,0):0<x<1\}$, and $L:=\pa\Om\setminus S$.
It is known (see \cite[Theorem 1.1]{Zhe}) that $V_D+V_N$ is closed if and only if there exist a constant $K>0$ such that 
    \begin{equation}\label{closedinequality}
        \|u\|_{\mathcal{H}}\leq K\|u+v\|_{\mathcal{H}}\hspace{4mm}\text{for all } u\in V_D, v\in V_N.
    \end{equation}
Assume for a contradiction that \eqref{closedinequality} holds. Consider the sequences
 $(u_n)\subset V_D$  and $(v_n)\subset V_N$  given by $u_n:=\sin{(nx)}\sinh{(ny)}$ and $v_n:=-\sin{(nx)}\cosh{(ny)}$. Then
 $$\De u_n=-n^2\sin{(nx)}\sinh{(ny)}+n^2\sin{(nx)}\sinh{(ny)}=0$$
 and $$\De v_n=n^2\sin{(nx)}\cosh{(ny)}-n^2\sin{(nx)}\cosh{(ny)}=0,$$
 whence $u_n$ and $v_n$ are harmonic. Clearly, $u_n|_{S}=0$ and 
 $$\frac{\pa v_n}{\pa n}\Big|_{S}=-\frac{\pa v_n}{\pa y}\Big|_{S}=n\sin{(nx)}\sinh{(ny)}\big|_S=0.$$
 Therefore $u_n\in V_D$ and $v_n\in V_N$.
 Note that for all $n\in\mathbb{N}$
 \begin{align*}
     \|\Gr u_n\|_{L^2(\Om)}^2&=n^2\underset{0}{\int}^1\underset{0}{\int}^1\big(\cos^2{(nx)}\sinh^2{(ny)}+\sin^2{(nx)}\cosh^2{(ny)}\big)\dx\dy\\
     &=\frac{n}{4}\big(\sinh{(2n)}-\sin{(2n)}\big)= \frac{n}{8} e^{2n}(1-e^{-4n}-2\sin{(2n)}e^{-2n})\\
     &\ge\frac{ne^{2n}}{16},
 \end{align*}
 and
 \begin{align*}
     \|\Gr(u_n+v_n)\|_{L^2(\Om)}^2&=n^2\underset{0}{\int}^1\underset{0}{\int}^1\big(\sin^2(nx)+\cos^2(nx)\big) e^{-2ny}\dx\dy=\frac{n}{2}(1-e^{-2n})\\
     &\le\frac{n}{4}.
 \end{align*}
 Consequently,
 \begin{align}
     \nonumber
     \frac{ \|\Gr u_n\|_{L^2(\Om)}^2}{ \|\Gr(u_n+v_n)\|_{L^2(\Om)}^2}\ge\frac{e^{2n}}{4}.
 \end{align}
On the other hand, by \eqref{closedinequality}, there exists $K>0$ such that
\begin{align*}
    \frac{ \|\Gr u_n\|_{L^2(\Om)}^2}{ \|\Gr(u_n+v_n)\|_{L^2(\Om)}^2}\leq K^2.
\end{align*}
Whence,
\begin{align*}
    e^{2n}\leq 4K^2,
\end{align*}
which is a contradiction. Hence $V_D+V_N$ is not closed.
\end{proof}
\begin{remark}
\label{formalRemark}
    Interestingly, if we allow ourselves to 
argue formally, we notice the proportionality between $c(V_D,V_N)$ and the measure of the inaccessible part of the boundary $L$.  
We let \ref{A_1} hold, and consider that $V_D$ and $V_N$  to be  the same closed subspaces of $\mathcal{H}$ defined in Section \ref{sec 3.2} and, additionally, assumes that there exists a constant $C'>0$ such that  
 \begin{equation}\label{conditionbound}
     \left\|\frac{\pa v}{\pa n}\right\|_{L^\infty(L)}\leq C'\hspace{4mm}\text{ for every } v\in V_N.
 \end{equation}
Using the Green's formula, the fact that $u=0$ on $S$, and the standard trace-Poincar\'{e} machinery, we obtain
\begin{align}
    \nonumber
    \left|\int_\Omega\Gr u\Gr v\dx\right|&=\left|\int_L\frac{\partial v}{\partial n}u\dsi\right|\le C'\int_L|u|\dsi\le C'\sqrt{|L|}\|u\|_{L^2(\partial\Omega)}\\
    \nonumber
    &\le C'C'_\Omega\sqrt{|L|}\|\Gr u\|_{L^2(\Omega)}\le C_\Omega\sqrt{|L|}\|\Gr u\|_{L^2(\Omega)},
\end{align}
where $|L|$ denotes the surface measure of $L$. Hence,
\begin{equation}\label{conditionrate}
    c(V_D,V_N)\leq C_\Om\sqrt{|L|}. 
\end{equation}
In \eqref{conditionrate}, $C_\Om>0$ is a constant that depends on the set $\Om$ (via the trace inequality and Poincar\'{e}'s inequality), as well as the bound (\ref{conditionbound}).
This estimate indicates that, should it exists, the convergence rate is affected   by the size of the unknown part of the boundary $L$. In particular, $c(V_D,V_N)$ decreases as the measure of $L$ becomes smaller. Therefore, a smaller interface measure of $L$ leads to a smaller value of $c(V_D,V_N)$, which ultimately results in faster convergence, i.e. the more we know, the faster we expect to converge\footnote{This is (somewhat trivially) a rigorous statement in the one-dimensional case. In higher space dimensions, an assumption like \eqref{conditionbound} can only hold if one is able to lift sufficiently high the regularity of the target solution (perhaps by taking solutions from $H^3(\Om)$ instead of $H^1(\Om)$). However, this regularity lift would change the underlying Hilbert space $\mathcal{H}$ as well as the meaning of the spaces $V_D$ and $V_N$; so, the  setting of the alternating orthogonal projections method changes drastically and a new analysis would need to be done. }.
\end{remark}

\section{Conclusion}

In this work, we analyzed the geometric structure of an iterative scheme for a linear elliptic Cauchy problem. More precisely, by reformulating the scheme in terms of alternating orthogonal projections onto two fixed closed subspaces of a suitable Hilbert space, we characterize both the convergence of the iterative scheme and its limit. 

Our results provide a geometric framework for understanding and analyzing iterative methods for linear elliptic Cauchy problems. An important direction for future research is to investigate whether the present framework can be extended to more general classes of coupled elliptic Cauchy systems in the multiscale setting. Such an approach offers a potentially useful framework for solving inverse problems in real-world applications involving heterogeneous media.
\appendix
\renewcommand{\thesection}{\Alph{section}} 
\makeatletter
\let\oldthetheorem\thetheorem
\let\oldthelemma\thelemma
\let\oldtheproposition\theproposition
\let\oldthecorollary\thecorollary
\let\oldtheremark\theremark
\renewcommand{\thetheorem}{\thesection.\arabic{theorem}}
\renewcommand{\thelemma}{\thesection.\arabic{lemma}}
\renewcommand{\theproposition}{\thesection.\arabic{proposition}}
\renewcommand{\thecorollary}{\thesection.\arabic{corollary}}
\renewcommand{\theremark}{\thesection.\arabic{remark}}
\renewcommand{\thedefinition}{\thesection.\arabic{definition}}
\makeatother
\refstepcounter{section} 
\section*{Appendix \thesection}
We include in this appendix a few elementary auxiliary results that are useful throughout this work.
\begin{proposition}\label{Pr A.1}
    Let \ref{A_1} hold, and  $u\in H^1(\Om)$ satisfy $$-\De u=f\hspace{8mm}\text{in}\hspace{2mm}\Om,$$ where $f\in L^2(\Om)$. If $\frac{\pa u}{\pa n}\big|_L\in H^{-\frac{1}{2}}(L)$ then $\frac{\pa u}{\pa n}\big|_S\in H^{-\frac{1}{2}}(S)$.
\end{proposition}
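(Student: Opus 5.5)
The plan is to define the normal derivative of $u$ on the whole boundary by duality through Green's formula, and then restrict it to $S$. Since $u\in H^1(\Om)$ with $-\De u=f\in L^2(\Om)$, the vector field $\Gr u$ lies in $H(\mathrm{div},\Om)=\{w\in L^2(\Om)^d:\ \mathrm{div}\, w\in L^2(\Om)\}$. We set
\begin{equation*}
\Big\langle \frac{\pa u}{\pa n},\gamma\Big\rangle_{\pa\Om}:=\int_\Om \Gr u\Gr \phi\dx-\int_\Om f\phi\dx ,
\end{equation*}
where $\phi\in H^1(\Om)$ is any lifting of $\gamma\in H^{\frac12}(\pa\Om)$. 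This is well defined: if two liftings are given, their difference lies in $H^1_0(\Om)$, and the right-hand side vanishes on $H^1_0(\Om)$ because $-\De u=f$ weakly. Choosing a bounded right inverse of the trace map gives
\begin{equation*}
\Big|\Big\langle\frac{\pa u}{\pa n},\gamma\Big\rangle\Big|\le C\big(\|\Gr u\|_{L^2(\Om)}+\|f\|_{L^2(\Om)}\big)\|\gamma\|_{H^{\frac12}(\pa\Om)} ,
\end{equation*}
so $\frac{\pa u}{\pa n}\in H^{-\frac12}(\pa\Om)$. This is the standard normal-trace theorem for $H(\mathrm{div})$ fields on Lipschitz (in particular piecewise $C^1$) domains.

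Next we restrict to $S$. Under the convention that $H^{-\frac12}(S)$ is the dual of $\widetilde H^{\frac12}(S)=H^{\frac12}_{00}(S)$, i.e. of the functions in $H^{\frac12}(S)$ whose extension by zero lies in $H^{\frac12}(\pa\Om)$, we define $\frac{\pa u}{\pa n}\big|_S$ by
\begin{equation*}
\Big\langle\frac{\pa u}{\pa n}\Big|_S,g\Big\rangle:=\Big\langle\frac{\pa u}{\pa n},\tilde g\Big\rangle_{\pa\Om},
\end{equation*}
where $\tilde g$ is the zero extension of $g$. Continuity of the zero extension then gives $\frac{\pa u}{\pa n}\big|_S\in H^{-\frac12}(S)$, and under this convention the hypothesis on $L$ is not even needed. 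The hypothesis matters if $H^{-\frac12}(S)$ is meant as the dual of $H^{\frac12}(S)$, which is the space in which $\int_S q_0\psi\dsi$ must make sense for all $\psi\in H^1_{DL}(\Om)$; this choice is also consistent with the boundary pairings used in Definitions \ref{2.2.1}--\ref{2.2.3}.

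In that case, for $g\in H^{\frac12}(S)$ we would take any $\gamma\in H^{\frac12}(\pa\Om)$ with $\gamma|_S=g$ and set
\begin{equation*}
\Big\langle\frac{\pa u}{\pa n}\Big|_S,g\Big\rangle:=\Big\langle\frac{\pa u}{\pa n},\gamma\Big\rangle_{\pa\Om}-\Big\langle\frac{\pa u}{\pa n}\Big|_L,\gamma|_L\Big\rangle_L .
\end{equation*}
Well-definedness means that the right-hand side is independent of the extension $\gamma$. The difference of two extensions is supported on $L$, and the two terms cancel precisely because the assumed functional $\frac{\pa u}{\pa n}\big|_L\in H^{-\frac12}(L)$ is compatible with the global normal trace; that compatibility is the meaning of the hypothesis. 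Boundedness follows from choosing a bounded extension operator $H^{\frac12}(S)\to H^{\frac12}(\pa\Om)$, which exists for Lipschitz boundary pieces.

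The main obstacle is this functional-analytic bookkeeping: pinning down which dual space $H^{-\frac12}(S)$ denotes, and showing that the $L$-hypothesis makes the splitting
\begin{equation*}
\frac{\pa u}{\pa n}=\frac{\pa u}{\pa n}\Big|_S+\frac{\pa u}{\pa n}\Big|_L
\end{equation*}
well defined. The PDE content is only Green's formula together with the trace theorem.
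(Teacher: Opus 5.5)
Your proposal is correct and is essentially the paper's argument. In the convention the paper uses ($H^{-\frac12}(S)$ as the dual of $H^{\frac12}(S)$), your functional $g\mapsto\langle \frac{\pa u}{\pa n},\gamma\rangle_{\pa\Om}-\langle\frac{\pa u}{\pa n}\big|_L,\gamma|_L\rangle_L$ is exactly the paper's $\mathcal{F}(\varphi)=\int_\Om(\Gr u\Gr\tilde\varphi-f\tilde\varphi)\dx-\int_L\frac{\pa u}{\pa n}\tilde\varphi\dsi$, with independence of the lifting coming from Green's formula in both cases; you are simply more explicit about what the paper leaves implicit (the global normal trace, continuity via a bounded extension, and the remark that with the $H^{\frac12}_{00}(S)$-dual convention the hypothesis on $L$ is unnecessary).
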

\begin{proof}
   The following arguments are inspired by the proof of Proposition~1 in \cite{mazya}.\\
    Let $\varphi\in H^\frac{1}{2}(S)$ and define the functional $\mathcal{F}:H^\frac{1}{2}(S)\to \R$ via
     $$\mathcal{F}(\varphi):=\underset{\Om}{\int}(\Gr u\Gr \Tilde{\varphi}-f\Tilde{\varphi})\dx-\underset{L}{\int}\frac{\pa u}{\pa n}\Tilde{\varphi}\dsi,$$
    where $\Tilde{\varphi}\in H^1(\Om)$ with $\Tilde{\varphi}|_S=\varphi$. If $\Tilde{\varphi}_1, \Tilde{\varphi}_2\in H^1(\Om)$ with  $\Tilde{\varphi}_1=\Tilde{\varphi}_2$ on $S$, then
     $$\mathcal{F}((\Tilde{\varphi}_1-\Tilde{\varphi}_2)|_{S})=\underset{\Om}{\int}\left(\Gr u\Gr(\Tilde{\varphi}_1-\Tilde{\varphi}_2)-f(\Tilde{\varphi}_1-\Tilde{\varphi}_2)\right)\dx-\underset{L}{\int}\frac{\pa u}{\pa n}(\Tilde{\varphi}_1-\Tilde{\varphi}_2)\dsi.$$
     Now, we use Green's formula to get
     \begin{align*}
         \mathcal{F}((\Tilde{\varphi}_1-\Tilde{\varphi}_2)|_{S})&=\underset{\Om}{\int}f(\Tilde{\varphi}_1-\Tilde{\varphi}_2)\dx+\underset{L}{\int}\frac{\pa u}{\pa n}(\Tilde{\varphi}_1-\Tilde{\varphi}_2)\dsi-\underset{\Om}{\int}f(\Tilde{\varphi}_1-\Tilde{\varphi}_2)\dx\\
         &\hspace{4mm}-\underset{L}{\int}\frac{\pa u}{\pa n}(\Tilde{\varphi}_1-\Tilde{\varphi}_2)\dsi\\
         &=0,\\
     \text{which implies }\mathcal{F}(\Tilde{\varphi}_1|_S)&=\mathcal{F}(\Tilde{\varphi}_2|_S).
     \end{align*}
     Hence, $\mathcal{\F}$ is well defined. $\F$ is  linear and continuous on $H^\frac{1}{2}(S)$. \\
     Taking $\varphi\in H^\frac{1}{2}(S)$, we get
     \begin{align*}
         \mathcal{F}(\varphi)&=\underset{\Om}{\int}(\Gr u\Gr \Tilde{\varphi}-f\Tilde{\varphi})\dx-\underset{L}{\int}\frac{\pa u}{\pa n}\Tilde{\varphi}\dsi.
         \end{align*}
         Applying Green's formula, we can write the last result as follows
         \begin{align*}
         \mathcal{F}(\varphi)&=\underset{\Om}{\int}f\Tilde{\varphi}\dx+\underset{L}{\int}\frac{\pa u}{\pa n}\Tilde{\varphi}\dsi+\underset{S}{\int}\frac{\pa u}{\pa n}\Tilde{\varphi}\dsi-\underset{\Om}{\int}f\Tilde{\varphi}\dx-\underset{L}{\int}\frac{\pa u}{\pa n}\Tilde{\varphi}\dsi\\
         &=\underset{S}{\int}\frac{\pa u}{\pa n}\Tilde{\varphi}\dsi\\
         &=\underset{S}{\int}\frac{\pa u}{\pa n}\Big|_S\varphi\dsi\\
         \text{implies } \mathcal{F}&=\frac{\pa u}{\pa n}\Big|_S.
     \end{align*}
     Since $\F$ is a linear and continuous function on $H^\frac{1}{2}(S)$ and $\mathcal{F}=\frac{\pa u}{\pa n}\Big|_S$ , it implies that
     $$\frac{\pa u}{\pa n}\Big|_S\in H^{-\frac{1}{2}}(S).$$
\end{proof}
\begin{definition}
   Let $\Om\subset\R^d$ be a bounded domain. A function $u\in H^1(\Om)$ is said to be weakly harmonic in $\Om$ (or $-\De u=0$ weakly in $\Om$) if 
    $$\underset{\Om}{\int}\Gr u\Gr \Psi\dx=0\hspace{1cm}\text{for all }\Psi\in H_0^1(\Om).$$
    We refer the reader to \cite{Liang} for a discussion of weakly harmonic functions.
\end{definition}
\begin{theorem}\label{Th A.1}
     Let $S\neq\emptyset$. If $u\in H^1(\Om)$ is weakly harmonic in $\Om$ and  satisfies  
   $$u=0\hspace{6mm}\text{on}\hspace{2mm} S,$$
and
    $$ \frac{\pa u}{\pa n}=0\hspace{6mm}\text{on}\hspace{2mm} S,$$
then $u=0$ in $\Om$.
\end{theorem}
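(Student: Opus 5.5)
The plan is to extend $u$ by zero across $S$ and then use unique continuation for harmonic functions. Let $x_0$ be a point in the relative interior of $S$ where $\pa\Om$ is $C^1$; such a point exists because $S$ is a non-empty relatively open piece of a piecewise $C^1$ boundary. Choose a small ball $B$ centred at $x_0$ such that $B\cap\pa\Om\subset S$. Define $\tilde\Om:=\Om\cup B$ and set
\[
\tilde u := u \text{ in } \Om, \qquad \tilde u := 0 \text{ in } B\setminus\overline{\Om}.
\]

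The first step is to show that $\tilde u\in H^1(\tilde\Om)$. This holds because the trace of $u$ on $S\cap B$ vanishes, so gluing with the zero function across this interface produces no jump. Concretely, the weak gradient of $\tilde u$ is $\Gr u$ in $\Om$ and $0$ outside $\Om$. One checks this by integrating by parts against $\Psi\in C_c^\infty(\tilde\Om)$: the boundary term is $\int_{S}u\,\Psi n\dsi$, and it vanishes since $u=0$ on $S$.

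The second step is to show that $\tilde u$ is weakly harmonic in $\tilde\Om$. Take $\Psi\in C_c^\infty(\tilde\Om)$. Then
\[
\int_{\tilde\Om}\Gr\tilde u\,\Gr\Psi\dx=\int_\Om\Gr u\,\Gr\Psi\dx .
\]
Since $\Psi$ vanishes near $\pa\Om\setminus S$, Green's formula (in the sense of the $H^{-1/2}$ normal trace, as used in Proposition \ref{Pr A.1}) turns this into $\langle \frac{\pa u}{\pa n},\Psi\rangle_S$, which is zero by hypothesis. Weyl's lemma then shows that $\tilde u$ is real-analytic and harmonic in the connected open set $\tilde\Om$.

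The third step is unique continuation. By construction $\tilde u\equiv0$ on the non-empty open set $B\setminus\overline\Om$. Since $\tilde\Om$ is connected ($\Om$ is a domain and $B$ meets it), the identity theorem for real-analytic functions gives $\tilde u\equiv0$ on $\tilde\Om$, hence $u=0$ in $\Om$.

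I expect the main obstacle to be the rigorous justification of the Green's formula step with only $H^{-1/2}$ normal data. Here the test function $\Psi|_{\pa\Om}$ has to be a legitimate element of the dual pairing on $S$; it is, because it is smooth and supported away from $L$. One also has to check that $B\setminus\overline\Om$ is non-empty and that the gluing is valid, which needs the local $C^1$ graph structure of the boundary near $x_0$. If the extension route became delicate, I would instead cite Holmgren's theorem or a unique continuation result for Cauchy data, in the spirit of \cite{leitao}.

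Note that in $\mathcal{H}$ the statement is understood modulo constants. For \eqref{VdVnIntersect} it is therefore enough to observe that the representative vanishing on $S$ is zero.
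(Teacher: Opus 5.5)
Your proof is correct, but it does not follow the paper. The paper gives no argument of its own and simply invokes an external unique continuation result for Cauchy data (Theorem~1.4 in \cite{mikko}).

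What you supply is a self-contained version of the standard argument behind results of this type:
\begin{enumerate}
\item extend $u$ by zero across a piece of $S$;
\item check that the extension is an $H^1$ weak solution of Laplace's equation on the connected set $\Om\cup B$;
\item apply unique continuation from the open set $B\setminus\overline{\Om}$, which for the Laplacian reduces to Weyl's lemma and the identity theorem for real-analytic functions.
\end{enumerate}

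Your route has two advantages. First, it shows that only the local structure of $\pa\Om$ near one point of $S$ matters. Second, it makes explicit a hypothesis that the statement ``$S\neq\emptyset$'' glosses over: $S$ must have non-empty interior relative to $\pa\Om$. If $d\ge 2$ and $S$ is, say, a single point, the conditions on $S$ are vacuous in the trace sense and the conclusion fails.

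Appealing to a general unique continuation theorem, on the other hand, keeps the appendix short. It is also what one would need for operators with non-analytic coefficients, where your analyticity shortcut is unavailable.

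Finally, the step you flag as the main obstacle is actually immediate. Since $\Psi$ vanishes in a neighbourhood of $L$, one has $\Psi|_{\Om}\in H^1_{DL}(\Om)$. The identity $\int_\Om \Gr u\,\Gr\Psi\dx=0$ is then precisely the weak meaning of $\frac{\pa u}{\pa n}=0$ on $S$ used in the proof of Proposition~\ref{Pr A.2}, so no separate Green's formula is needed.
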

\begin{proof}
    See Theorem~1.4 in \cite{mikko}.
\end{proof}
\begin{proposition}\label{Pr A.2}
    $V_D$ and $V_N$ are subspaces of $\mathcal{H}$, where $V_D$, $V_N$, and $\mathcal{H}$ are defined in Section \ref{sec 3.2}. Then $V_D$ and $V_N$ are closed subspaces in $\mathcal{H}$.
\end{proposition}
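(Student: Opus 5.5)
To show that $V_D$ and $V_N$ are closed, I would express each as the orthogonal complement (in $\mathcal{H}$, with the inner product $\langle v,w\rangle_\mathcal{H}=\int_\Om\Gr v\Gr w\dx$) of a suitable set, or as the kernel of a family of continuous linear functionals. I would first check that the defining conditions are well defined on equivalence classes modulo constants. Weak harmonicity depends only on $\Gr v$. For $V_N$ the condition $\frac{\pa v}{\pa n}=0$ on $S$ is likewise invariant under adding constants. For $V_D$ the condition means that the class contains a (unique, since $S$ has positive measure) representative vanishing on $S$.

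\textbf{Closedness of $V_N$.} Combining weak harmonicity with the vanishing Neumann trace on $S$ and using Green's formula as in the proof of Proposition \ref{Pr A.1}, I would characterize
$$V_N=\Big\{v\in\mathcal{H}:\ \int_\Om\Gr v\Gr\psi\dx=0\ \text{ for all }\psi\in H^1_{DL}(\Om)\Big\}.$$
Testing with $\psi\in H^1_0(\Om)\subset H^1_{DL}(\Om)$ gives harmonicity. The functional $\mathcal{F}$ of Proposition \ref{Pr A.1} then equals $\frac{\pa v}{\pa n}|_S$ and vanishes on every $\varphi=\psi|_S$. Conversely, a function in $V_N$ satisfies the identity by Green's formula, since $\psi=0$ on $L$ and $\frac{\pa v}{\pa n}=0$ on $S$. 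Hence $V_N=(H^1_{DL}(\Om)/\R)^{\perp}$. An orthogonal complement is an intersection of kernels of the continuous functionals $v\mapsto\langle v,\psi\rangle_\mathcal{H}$, and is therefore closed.

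\textbf{Closedness of $V_D$.} Weak harmonicity is the condition $v\perp H^1_0(\Om)/\R$, which is closed. For the Dirichlet condition I would take $v_n\in V_D$ with $v_n\to v$ in $\mathcal{H}$ and choose representatives $\tilde v_n$ with $\tilde v_n|_S=0$. The Poincar\'e inequality for functions vanishing on $S$ (valid since $|S|>0$ by \ref{A_1}), namely $\|w\|_{H^1(\Om)}\le C\|\Gr w\|_{L^2(\Om)}$, shows that $(\tilde v_n)$ is Cauchy in $H^1(\Om)$. It therefore converges to some $\tilde v\in H^1(\Om)$. By continuity of the trace, $\tilde v|_S=0$; harmonicity passes to the limit; and $\Gr\tilde v=\Gr v$, so $v\in V_D$. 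The same sequential argument, using the continuity of $\frac{\pa}{\pa n}|_S$ into $H^{-\frac12}(S)$ along sequences of harmonic functions, would give an alternative proof for $V_N$.

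The main obstacle is the Neumann-trace step: $\frac{\pa v}{\pa n}|_S$ is only defined weakly, so the equivalence between ``$\frac{\pa v}{\pa n}=0$ on $S$'' and orthogonality to $H^1_{DL}(\Om)$ must be argued through the variational definition of the conormal derivative from Proposition \ref{Pr A.1}. This requires that traces of $H^1_{DL}(\Om)$ functions on $S$ are rich enough, i.e.\ dense in the relevant test space in $H^{\frac12}(S)$. I would take this as the working definition of the Neumann condition in \eqref{VnDef}, which makes the step consistent.
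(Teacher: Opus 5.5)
Your proof is correct and is essentially the paper's argument: for $V_N$ the paper also passes to the limit in $\int_\Om\Gr w_n\Gr\psi\dx=0$ for $\psi\in H^1_{DL}(\Om)$, so it tacitly uses your characterization of $V_N$ as the orthogonal complement of (the classes of) $H^1_{DL}(\Om)$; for $V_D$ it runs the same sequential argument, passing harmonicity to the limit and using continuity of the trace. Your Poincar\'e step, choosing representatives $\tilde v_n$ with $\tilde v_n|_S=0$ that then converge in $H^1(\Om)$, fills a detail the paper skips: convergence in $\mathcal{H}=H^1(\Om)/\R$ controls only gradients, so it does not by itself justify applying the trace operator.
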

\begin{proof}
    Let $v$ be the limit point of $V_D$. Take $(v_n)\subset V_D$ such that $$v_n\to v\hspace{4mm} \text{in}\hspace{2mm} \mathcal{H}.$$
    It suffices to show that $v\in V_D$.\\
     For $\phi\in H_0^1(\Om)$ and $v_n\in V_D$, we have
    $$\underset{\Om}{\int}\Gr v_n\Gr \phi\dx=0.$$
    Since $v_n\to v$ in $\mathcal{H}$, it follows that
    $$\underset{\Om}{\int}\Gr v\Gr \phi\dx=0\hspace{6mm}\text{for all }\phi\in H_0^1(\Om).$$
    Hence, $v$ is weakly harmonic.
    Furthermore,  $v_n=0$ on $S$ in the trace sense for every $n\in \N$. By the continuity of the trace operator, it follows that $v=\underset{n\to\infty}{\text{lim}}v_n=0$ on $S$ in the trace sense.\\
    Hence, $V_D$ is a closed subspace in $\mathcal{H}$.\\
    Now, let $w$ be the limit point of $V_N$. Take $(w_n)\subset V_N$ such that $$w_n\to w\hspace{4mm}\text{in}\hspace{2mm} \mathcal{H}.$$
     It suffices to show that $w\in V_N$.\\
    For $\psi\in H_{DL}^1(\Om)$ and $w_n\in V_N$, we have
    $$\underset{\Om}{\int}\Gr w_n\Gr \psi\dx=0.$$
    Since $w_n\to w$ in $\mathcal{H}$, it follows that
    $$\underset{\Om}{\int}\Gr w\Gr \psi\dx=0\hspace{6mm}\text{for all }\psi\in H_{DL}^1(\Om).$$
    Hence, $w$ is weakly harmonic and $\frac{\pa w}{\pa n}=0$ on $S$.\\
    Therefore, $V_N$ is a closed subspace in $\mathcal{H}$.
\end{proof}

\begin{ack}
AM thanks L. Beilina (Chalmers, Sweden) and N. J{\"a}verg{\aa}rd (Karlstad, Sweden) for fruitful discussions around elliptic Cauchy problems and their numerical approximation.
\end{ack}

\begin{funding}
The authors gratefully acknowledge the Swedish Research Council for partial financial support under project "Homogenization of Nonlinear Drift-diffusion Systems for Charge Transport through Bicontinuous Media" (project no. 2024-05606).
 \end{funding}

\bibliographystyle{plain}
\bibliography{references}

\end{document}